\newtheorem{theorem}{Theorem}[section]
\newtheorem{lemma}[theorem]{Lemma}
\newtheorem{proposition}[theorem]{Proposition}
\newtheorem{corollary}[theorem]{Corollary}
\newcommand{\mf}[1]{{\mathfrak #1}}
\newcommand{\bb}[1]{{\mathbb #1}}
\newcommand{\ms}[1]{{\mathscr #1}}
\newcounter{as}[section]
\newtheorem{asser}[as]{Assertion}
\newcommand{\<}{\langle}
\renewcommand{\>}{\rangle}
\begin{document}

\title[Super-diffusive limit of exclusion processes] {A correction to
  the hydrodynamic limit of boundary driven exclusion processes in a
  super-diffusive time scale}

\author{E. Chavez, C. Landim}

\address{\noindent IMPA, Estrada Dona Castorina 110, CEP 22460 Rio de
  Janeiro, Brasil.  \newline e-mail: \rm \texttt{echavez@imca.edu.pe} }

\address{\noindent IMPA, Estrada Dona Castorina 110, CEP 22460 Rio de
  Janeiro, Brasil and CNRS UMR 6085, Universit\'e de Rouen, Avenue de
  l'Universit\'e, BP.12, Technop\^ole du Madril\-let, F76801
  Saint-\'Etienne-du-Rouvray, France.  \newline e-mail: \rm
  \texttt{landim@impa.br} }

\keywords{hydrodynamic equation, boundary driven interacting particle
  systems, first order correction, quasi-static transformations}

\begin{abstract}
  We consider a one-dimensional, weakly asymmetric, boundary driven
  exclusion process on the interval $[0,N]\cap \bb Z$ in the
  super-diffusive time scale $N^2 \epsilon^{-1}_N$, where $1\ll
  \epsilon^{-1}_N \ll N^{1/4}$. We assume that the external field and
  the chemical potentials, which fix the density at the boundaries,
  evolve smoothly in the macroscopic time scale. We derive an equation
  which describes the evolution of the density up to the order
  $\epsilon_N$.
\end{abstract}

\maketitle

\section{Introduction}
\label{sec0}

A theory of thermodynamic transformations for nonequilibrium
stationary states has been proposed recently \cite{bgjl-jsp12,
  bgjl-prl13} in the framework of the Macroscopic Fluctuation Theory
\cite{bdgjl-jsp02, bdgjl-rmp}. It defined the renormalized work performed
by a transformation between two nonequilibrium stationary states in
driven diffusive systems, and it proved a Clausius inequality which
postulates that the renormalized work is always larger than the
variation of the equilibrium free energy between the final and the
initial nonequilibrium states.

In quasi-static transformations, transformations in which the
variations of the environment are very slow, the renormalized work
coincides asymptotically with the variation of the equilibrium free
energy. More precisely, fix a transformation $u(t)$, $t\ge 0$, between
two nonequilibrium stationary states, and denote by $W^{\rm ren}(u)$
the renormalized work performed by $u$. Let $u_\epsilon$ be the
transformation $u$ slowed down by a parameter $\epsilon>0$,
$u_\epsilon (t)= u(t \epsilon)$. Then, $\lim_{\epsilon\to 0} W^{\rm
  ren}(u_\epsilon) = \Delta F$, where $\Delta F$ represents the
variation of the equilibrium free energy between the final and the
initial nonequilibrium states. Note that the asymptotic identity is
attained independently of the transformation $u$ chosen.

Let us mention that the theory of thermodynamic transformations
between nonequilibrium states, and the analysis of quasi-static
transformations has been extended to the framework of stochastic
perturbations of microscopic Hamiltonian dynamics in contact with heat
baths in \cite{o1, os1, ol1}.

To select, among the slow transformations between two nonequilibrium
stationary states, the one which minimizes the renormalized work we
have to examine the first order term in the expansion in $\epsilon$ of
the renormalized work. This question has been addressed in
\cite{bdgjl-jstat15}, where it was shown that for slow transformations
between two equilibrium states the first order correction of the
renormalized work is minimized by transformations whose intermediate
states are equilibrium states, and where a partial differential
equation which describes the evolution of the optimal transformation
has been derived.

A time-change permits to convert a slow transformation in an ordinary
transformation whose differential operator is multiplied by
$\epsilon^{-1}$. This observation brings us to the question of the
correction to the hydrodynamic equation of boundary driven interacting
particle systems. 

Consider a symmetric, one-dimensional dynamics in contact with
reservoirs and in the presence of an external field.  At the
macroscopic level the system is described by a local density $\rho(t,
x)$, $x\in [0,1]$, which evolves according to the driven diffusive
equation
\begin{equation}
\label{i01}
\left\{
\begin{split}
& \partial_t \rho = 
\partial_x(D(\rho)\partial_x\rho) - \partial_x
(\chi(\rho) E) \\
& f'(\rho(t,a))=\lambda_a(t) 
\quad\text{for}\quad a\;=\; 0\,,\, 1\;,
\end{split}
\right.
\end{equation}
where $D$ is the diffusivity, $\chi$ the mobility, $E(t,x)$ an
external field, $\lambda_0(t)$, $\lambda_1(t)$ time-dependent
chemical potentials, which fix the density at the boundaries, and $f$
the equilibrium free energy density.

For a fixed external field $E(x)$ and a chemical potential
$\lambda=(\lambda_0, \lambda_1)$, denote by $\bar\rho_{\lambda, E}$
the solution of the elliptic equation
\begin{equation}
\label{i04}
\left\{
\begin{split}
\partial_x(D(\rho)\partial_x\rho) - \partial_x (\chi(\rho) E)\;=\; 0
\;, \\ 
f'(\rho(a))=\lambda_a \quad\text{for}\quad a\;=\; 0\,,\, 1\;.
\end{split}
\right.
\end{equation}

Consider the driven diffusive equation \eqref{i01} speeded up by
$\epsilon^{-1}$. Fix a transformation $(\lambda(t), E(t))$,
$\epsilon>0$, and a bounded profile $v_0:[0,1]\to\bb R$. Denote by
$\rho_\epsilon (t)$ the solution of
\begin{equation}
\label{i06}
\left\{
\begin{split}
& \partial_t \rho = \epsilon^{-1} \big\{ 
\partial_x ( D(\rho) \partial_x \rho) - \partial_x
(\chi(\rho) E(t)) \big\} \;,  \\
& f'(\rho(t,0)) =  \lambda_0(t)\;, \;\; 
f'(\rho(t,1)) =  \lambda_1 (t)\;, \\
& \rho(0,\cdot) = \bar\rho_{\lambda(0), E(0)} (\cdot)
+ \epsilon v_0(\cdot)\;.
\end{split}
\right.
\end{equation}
A formal expansion in $\epsilon$ yields that, for $t>0$, $u_\epsilon
(t)=\epsilon^{-1} [\rho_\epsilon (t) - \bar\rho_{\lambda(t), E(t)}]$
converges to $v(t)$, the solution of the elliptic equation
\begin{equation}
\label{i05}
\left\{
\begin{split}
& \partial_t \bar\rho_{\lambda(t), E(t)} = 
\partial^2_x \big( D(\bar\rho_{\lambda(t), E(t)}) v \big) - \partial_x
\big(\chi'(\bar\rho_{\lambda(t), E(t)}) E(t) v \big) \;,  \\
& v(0) = v(1) = 0 \;.
\end{split}
\right.
\end{equation}
Note that the limit $v_t$ does not depend on the initial condition
$v_0$. 

The main results of this article, Theorems \ref{mt5} and \ref{mt7},
state a similar result for a microscopic dynamics speeded-up
super-diffusively. Consider a one-dimensional, weakly asymmetric,
exclusion process evolving on $\{1, \dots, N-1\}$, and in contact with
reservoirs at the boundaries. Assume that the density of each
reservoir evolves smoothly in the macroscopic time-scale, and that the
dynamics is speeded-up by $N^2 \epsilon^{-1}_N$, where $\epsilon_N\to
0$ as $N\uparrow\infty$. De Masi and Olla \cite{mo} proved that
starting from any initial distribution, at all macroscopic time $t>0$
the system converges to a local equilibrium state whose density
profile is given by the solution of the elliptic equation \eqref{i04}
with chemical potential $\lambda(t)$.

We examine in this article the correction to the hydrodynamic
equation. Assume that $\epsilon_N^{4} N\to\infty$, and that the
exclusion process starts from a local equilibrium state associated to
the density profile $\bar\rho_{\lambda(0), E(0)} + \epsilon_N
v_0$. Then, for all $t>0$, the system remains close, in the scale
$\epsilon_N^{-1}$, to a local equilibrium state whose density profile
is given by $\bar\rho_{\lambda(t), E(t)} + \epsilon_N v_t$, where
$v_t$ is the solution of the elliptic equation \eqref{i05}. More
precisely, for every cylinder function $\Psi$, and every continuous
function $H:[0,1]\to\bb R$, if $\eta^N_t$ represents the state at time
$t$ of the speeded-up exclusion process,
\begin{equation*}
\frac 1{\epsilon_N N} \sum_{j=1}^{N-1} H(j/N) \big\{ \tau_j \Psi(\eta^N_t) - 
E_{\bar\rho_{\lambda(t), E(t)} + \epsilon_N v_t}[\Psi] \big\} \to 0\;.
\end{equation*}
In this formula, $\{\tau_j: j\in \bb Z\}$ represents the group of
translations and $E_\gamma$ the expectation with respect to the local
equilibrium state associated to the density profile $\gamma$.

The proof of the main results follows the strategy proposed by
\cite{y, emy}, which consists in estimating the relative entropy of
the state of the process with respect to the local equilibrium state
whose density profile solves equation \eqref{i06} with
$\epsilon=\epsilon_N$. If $H_N(t)$ represents this latter relative
entropy, the main result asserts that for all $t>0$,
\begin{equation*}
\frac 1{N \epsilon_N^2} H_N(t) \;\to\;0\;.
\end{equation*}

The results presented in this article have a similarity to the
correction to the hydrodynamic equation, examined in \cite{emy, loy} in
the asymmetric case in dimension $d\ge 3$ and in \cite{jan} in the
symmetric case.

\section{Notation and main results}
\label{sec1}

\subsection{The model}
\label{bdep}

We examine a one-dimensional weakly asymmetric exclusion process in
contact with reservoirs. Fix $\Lambda=(0,1)$, and let $\Lambda_N =
\{1, \dots, N-1\}$, $N\ge 1$, be a discretization of $\Lambda$. The
microscopic point $j\in\Lambda_N$ thus represents the macroscopic
location $j/N\in \Lambda$.  Particles evolve on $\Lambda_N$ under an
exclusion rule which allows at most one particle per site.  The state
space is denoted by $\Sigma_N=\{0,1\}^{\Lambda_N}$, and the
configurations are represented by the Greek letters $\eta$, $\xi$ so
that $\eta(j)=1$ if site $j\in\Lambda_N$ is occupied for the
configuration $\eta$, and $\eta(j)=0$ otherwise.

Let $A_0$ be a finite subset of $\bb Z$ which contains the set
$\{0,1\}$. Consider a strictly positive function $c: \{0,1\}^{\bb Z}
\to \bb R_+$ which does not depend on the variables $\eta(0)$ and
$\eta(1)$ and whose support is contained in $A_0$:
\begin{equation*}
c(\eta) \;=\; c_\varnothing \;+\; \sum_{\substack{A\subset A_0 \\
    A\cap\{0,1\}=\varnothing}} c_A \prod_{k\in A} \eta(k)\;,
\end{equation*}
where $c_A$ are coefficients which may be negative. In the case where
$A_0 = \{0,1\}$, $c(\eta)$ is constant equal to
$c_\varnothing$. 

Denote by $\{\tau_k : k\in\bb Z\}$ the group of translations in
$\{0,1\}^{\bb Z}$ so that $\tau_k \eta$ is the configuration defined
by $(\tau_k \eta)(j) = \eta(k+j)$, $k$, $j\in\bb Z$. The action is
extended to cylinder functions $\Psi: \{0,1\}^{\bb Z} \to \bb R$, in
the usual way: $(\tau_k \Psi)(\eta) = \Psi(\tau_k \eta)$.

We assume throughout this article that the jump rate $c$ satisfies the
\emph{gradient} condition: There exist $m\ge 1$, cylinder functions
$h_1, \dots, h_m$, and finite-range, signed measures $\mu_1, \dots,
\mu_m$ on $\bb Z$ with vanishing total mass such that
\begin{equation}
\label{06}
[\eta(0) - \eta(1)] \, c(\eta) \;=\;
\sum_{a=1}^m \sum_{j\in \bb Z} \mu_a(j)\, (\tau_{-j} h_a)(\eta)\;.
\end{equation}
This decomposition is clearly not unique. In the case $c(\eta) = 1+
\eta(-1) + \eta(2)$, one may take $m=3$, $h_1(\eta)=\eta(-1) \eta(0)$,
$h_2(\eta) = \eta(0)\eta(2)$, $h_3(\eta)=\eta(0)$,
$\mu_1(0)=1=-\mu_1(2)$, $\mu_2(0)=1=-\mu_2(-1)$,
$\mu_3(0)=1=-\mu_3(-1)$.

Fix a chemical potential $\lambda:\partial\Lambda\to\bb R$, where
$\partial\Lambda$ represents the boundary of $\Lambda$. In one
dimension, $\lambda$ is simply a pair $(\lambda_0,\lambda_1)$.  Let
$\alpha=(\alpha_0$, $\alpha_1)$ be the density of particles associated
to the chemical potential $\lambda$: 
\begin{equation*}
\alpha_0 \;=\; \frac{e^{\lambda_0}}{1+e^{\lambda_0}} \;,\quad
\alpha_1 \;=\; \frac{e^{\lambda_1}}{1+e^{\lambda_1}}\;\cdot
\end{equation*}
Let $\tau^{N,\lambda}_j:\Sigma_N \to \{\alpha_0,\alpha_1, 0, 1\}^{\bb
  Z}$, $N\ge 1$, $j\in\bb Z$, be the operators defined by
\begin{equation*}
(\tau^{N,\lambda}_j\eta) (k) \;=\; \eta(k+j) \;\; \text{if $k+j\in
  \Lambda_N$}\;, \quad 
(\tau^{N,\lambda}_j\eta) (k) \;=\;
\begin{cases}
\alpha_0 & \text{if $k+j\le 0$}, \\
\alpha_1 & \text{if $k+j\ge N$}, \\
\end{cases}
\end{equation*}
for $k\in\bb Z$. As before the action of the operator
$\tau^{N,\lambda}_j$ can be extended to functions defined on
$\Sigma_N$.  For $N\ge 1$, $1\le j<N-1$, let the functions
$c^{N,\lambda}_{j,j+1} : \Sigma_N \to \bb R_+$ be given by
\begin{equation*}
c^{N,\lambda}_{j,j+1} \;=\; \tau^{N,\lambda}_j c  \;,
\end{equation*}
so that $c^{N,\lambda}_{j,j+1}(\eta) =
c(\tau^{N,\lambda}_j\eta)$. Note that $c^{N,\lambda}_{0,1}$ is usually
not equal to $c$. It follows from \eqref{06} that for $N\ge 1$, $1\le
j<N-1$,
\begin{equation}
\label{35}
w^{N,\lambda}_{j,j+1} \;:=\; [\eta(j) - \eta(j+1)] \, c^{N,\lambda}_{j,j+1}(\eta) \;=\;
\sum_{a=1}^m \sum_{k\in \bb Z} \mu_a(k)\, (\tau^{N,\lambda}_{j-k} h_a)(\eta)\;.
\end{equation}

We are now in a position to define the jump rates of the boundary
driven exclusion process.  Fix a smooth \emph{external field}
$E:[0,1]\to \bb R$, and let
\begin{equation*}
\begin{split}
& c^{N,\lambda,E}_{0,1} (\eta) \;=\;  r^\lambda_{0,1} (\eta) \, e^{(1/2N) \,
  E(0)\, [1 - 2\eta(1)]} \, c^{N,\lambda}_{0,1} (\eta)\;, \\
&\quad c^{N,\lambda,E}_{j,j+1} (\eta) \;=\; e^{(1/2N) \, E(j/N)\, [\eta(j) -
  \eta(j+1)]} \, c^{N,\lambda}_{j,j+1} (\eta)\;, \quad  1\le j\le N-2 \;, \\
&\qquad c^{N,\lambda,E}_{N-1,N} (\eta) \;=\; r^\lambda_{N-1,N} (\eta) \,
e^{(-1/2N) \,  E(1)\, [1 - 2\eta(N-1)]} \, c^{N,\lambda}_{N-1,N}(\eta) \;, 
\end{split}
\end{equation*}
where,
\begin{equation*}
\begin{split}
& r^\lambda_{0,1} (\eta) \;=\; \alpha_0 [1-\eta(1)] \;+\; \eta(1) [1-\alpha_0]\;, \\
&\quad r^\lambda_{N-1,N} (\eta) \;=\; \alpha_1 [1-\eta(N-1)] \;+\; \eta(N-1) [1-\alpha_1]\;.  
\end{split}
\end{equation*}

Denote by $L^{\lambda,E}_N=L_N$ the generator whose action on
functions $f:\Sigma_N \to \bb R$ is given by
\begin{equation}
\label{01}
(L_N f)(\eta)\;=\;  \sum_{j=0}^{N-1} c^{N,\lambda,E}_{j,j+1} (\eta) \, 
\{f(\sigma^{j,j+1} \eta) - f(\eta)\} \;.
\end{equation}
In this formula, the configuration $\sigma^{j,j+1} \eta$, $1\le j\le
N-2$, represents the configuration obtained from $\eta$ by exchanging
the occupation variables $\eta(j)$, $\eta(j+1)$,
\begin{equation*}
(\sigma^{j,j+1} \eta)(k) =
\begin{cases}
\eta(j+1),& k=j\;,\\
\eta(j), & k=j+1\;,\\
\eta(k), & k\neq j,j+1\;,\\
\end{cases}  
\end{equation*}
while $\sigma^{0,1} \eta$, $\sigma^{N-1,N} \eta$ represent the
configuration obtained from $\eta$ by flipping the occupation variables
$\eta(1)$, $\eta(N-1)$, respectively:
\begin{equation*}
(\sigma^j\eta)(k) =
\begin{cases}
\eta(k),& k\neq j\;, \\
1-\eta(k), & k=j\,.\\
\end{cases}  
\end{equation*}

\subsection{Transformations}

The dynamics introduced in the previous subsection is a finite-state,
irreducible, continuous-time Markov chain. It has therefore a unique
stationary state, denoted by $\nu^N_{\lambda, E}$. If the external
field $E(x)$ vanishes and the chemical potentials coincide, $\lambda_0
= \lambda_1=\lambda$, this stationary state is the Bernoulli product
measure with density $\rho = e^\lambda/(1+e^\lambda)$.

For a given continuous density profile $\gamma: [0,1] \to [0,1]$,
Denote by $\nu^N_{\gamma(\cdot)}$ the product measure on $\Sigma_N$
with marginals given by
\begin{equation}
\label{31}
\nu^N_{\gamma(\cdot)} \{\eta(j)=1\} \;=\; \gamma(j/N)\;, \quad
j\in\Lambda_N\;.
\end{equation}
Similarly, for $0\le\theta\le 1$, $\nu_{\theta}$, stands for the
Bernoulli product on $\{0,1\}^{\bb Z}$ with density $\theta$:
\begin{equation*}
\nu_{\theta} \{\eta(j)=1\} \;=\; \theta\;, \quad
j\in\bb Z\;.
\end{equation*}

To describe the macroscopic evolution of the density, denote the
\emph{diffusivity} by $D:[0,1]\to \bb R_+$, and the \emph{mobility} by
$\chi:[0,1]\to \bb R_+$:
\begin{equation}
\label{63}
D(\theta) \;=\; E_{\nu_\theta} [ c (\eta)]\; , \quad
\chi(\theta) \;=\; \frac 12 \, E_{\nu_\theta} \big[ \, [\eta(1)-\eta(0)]^2 c (\eta) \big]
\;=\; \theta(1-\theta)\, E_{\nu_\theta} [ c (\eta)] \;.
\end{equation} 
The transport coefficients $D$ and $\chi$ are related through the
local Einstein relation
\begin{equation}
\label{53}
D(\theta) \;=\; \chi(\theta) f''(\theta)\;,
\end{equation}
where $f:[0,1]\to\bb R$ the equilibrium free energy:
\begin{equation*}
f(\theta) \;=\; \theta \log \theta \;+\; [1-\theta] \log (1-\theta)\;.
\end{equation*}

Let $A=[0,1]$ or $\bb R_+$. Denote by $C^r(A)$, $r\ge 0$, the set of
functions $F:A\to\bb R$ which are $[r]$-times differentiable, where
$[r]$ stands for the integer part of $r$, and whose $[r]$-th
derivative is H\"older continuous with exponent $r-[r]$, and by
$C^r_0([0,1])$ the set of functions in $C^r([0,1])$ which vanish at
the boundary. If $r$ is an integer, we require the $[r]$-th derivative
to be continuous.  Similarly, $C^{r,s}(\bb R_+\times [0,1])$, $r$,
$s\ge 0$, represents the set of functions $F:\bb R_+\times [0,1]\to\bb
R$ which are $[r]$-times differentiable in the time variable,
$[s]$-times differentiable in the space variable and whose $[r]$-th
(resp. $[s]$-th) time (resp. space) derivative is H\"older continuous
with exponent $r-[r]$ (resp. $s-[s]$). As before, if $r$ or $s$ is an
integer, we require the corresponding derivative to be continuous.

Assume that $\lambda_a :\bb R_+\to \bb R$, $a=1$, $2$, are functions
in $C^1(\bb R_+)$ and that $E :\bb R_+\times [0,1]\to \bb R$ is a
function in $C^{1,2}(\bb R_+\times [0,1])$.  Fix a density profile
$\gamma: [0,1] \to (0,1)$ in $C^2([0,1])$ and assume that there exists
a function $\psi$ in $C^{1+\beta/2,2+\beta}(\bb R_+\times [0,1])$, for
some $\beta >0$, such that $f'(\psi(t,a))=\lambda_a(t)$ for $a=0$,
$1$, $\psi(0,x) = \gamma(x)$ for $x\in [0,1]$, and such that
\begin{equation*}
\partial_t \psi \;=\; \partial_x ( D(\psi) \partial_x \psi) 
- \partial_x (\chi(\psi) E(t))\quad
\text{at $(t,x)=(0,0)$ and $(t,x)=(0,1)$.}
\end{equation*}
Denote by $\rho(t,\cdot)$ the unique classical solution of the
parabolic equation
\begin{equation}
\label{27}
\left\{
\begin{split}
& \partial_t \rho = \partial_x ( D(\rho) \partial_x \rho) - \partial_x
(\chi(\rho) E(t))\;,  \\
& f'(\rho(t,0)) =  \lambda_0(t)\;, \;\; f'(\rho(t,1)) =  \lambda_1(t)\;, \\
& \rho(0,\cdot) = \gamma(\cdot)\;.
\end{split}
\right.
\end{equation}
We refer to Theorem 6.1 of Chapter V in \cite{lsu} for the existence
and the uniqueness of classical solutions of equation \eqref{27}.

Denote by $\ms M_N = \ms M (\Sigma_N)$ the set of probability measures
on $\Sigma_N$ endowed with the weak topology. For two probability
measures $\mu$, $\pi$ in $\ms M_N$, let $H_N(\mu|\pi)$ be the relative
entropy of $\mu$ with respect to $\pi$:
\begin{equation*}
H_N(\mu|\pi) \;=\; \sup_{f} \Big\{ \int f \, d\mu \;-\; \log \int e^f\,
d\pi\Big\}\;,
\end{equation*}
where the supremum is carried over all functions $f:\Sigma_N\to \bb
R$.  It is well known \cite{kl} that the relative entropy has an
explicit expression:
\begin{equation}
\label{33}
H_N(\mu|\pi) \;=\; 
\begin{cases}
\displaystyle \int \log \frac {d\mu}{d\pi} \; d\mu & \text{if
  $\mu \ll \pi$}, \\
\infty & \text{otherwise.}
\end{cases}
\end{equation}

Denote by $L_N(t)$, $t\ge 0$, the generator $L_N$ introduced in
\eqref{01} in which the pair $(E,\lambda)$ is replaced by
$(E(t),\lambda(t))$, and by $\{S^N_t :t\ge 0\}$ the semigroup
associated to the generators $N^2 L_N(t)$: $(d/dt) S^N_t = L_N(t)
S^N_t$.  Note that time has been speeded-up diffusively since the
generator has been multiplied by $N^2$.

\begin{theorem}
\label{mt3}
Let $\{\mu_N: N\ge 1\}$ be a sequence of probability measures,
$\mu_N\in \ms M_N$, such that
\begin{equation*}
\lim_{N\to\infty} \frac 1{N} \, H_N( \mu_N | \nu^N_{\gamma(\cdot)})
\;=\;0\;. 
\end{equation*}
Then, for every $t>0$,
\begin{equation*}
\lim_{N\to\infty} \frac 1{N} \, H_N(\mu_N S^N_t | \nu^N_{\rho(t,\cdot)})
\;=\;0\;.
\end{equation*}
\end{theorem}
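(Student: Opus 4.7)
The plan is to follow Yau's relative entropy method. Let $\pi^N_t = \nu^N_{\rho(t,\cdot)}$ be the product reference measure whose marginals are driven by the hydrodynamic profile $\rho(t,\cdot)$ solving \eqref{27}, let $f^N_t$ denote the density of $\mu_N S^N_t$ with respect to $\pi^N_t$, and set $H_N(t) = H_N(\mu_N S^N_t \mid \pi^N_t)$. The starting point is Yau's differential inequality
\begin{equation*}
\frac{d}{dt} H_N(t) \;\le\; \int \frac{1}{\pi^N_t} \, \bigl( N^2 L_N(t)^* \pi^N_t \;-\; \partial_t \pi^N_t \bigr) \, f^N_t \, d\pi^N_t \;,
\end{equation*}
where $L_N(t)^*$ is the adjoint of $L_N(t)$ in $L^2(\pi^N_t)$. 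Since $\pi^N_t$ is a product Bernoulli measure with slowly varying density $\rho(t, j/N)$, a direct calculation expresses the integrand as a sum over bonds of the local currents $w^{N,\lambda(t)}_{j,j+1}$ multiplied by discrete derivatives of explicit functions of $\rho(t,\cdot)$, together with a time-derivative term from $\partial_t \pi^N_t$ and the boundary contributions coming from the flip rates $r^{\lambda(t)}_{0,1}$, $r^{\lambda(t)}_{N-1,N}$ and from the external field $E(t,\cdot)$.

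Next I would exploit the gradient condition \eqref{06}--\eqref{35}, which permits writing each current $w^{N,\lambda(t)}_{j,j+1}$ as a discrete divergence $\sum_a \sum_k \mu_a(k)(\tau^{N,\lambda(t)}_{j-k} h_a)(\eta)$. A discrete summation by parts then converts the bulk bond sum into a sum of cylinder functions $\tau^{N,\lambda(t)}_j h_a$ multiplied by discrete second derivatives of smooth functions of $\rho(t,\cdot)$. At this point I would apply the standard replacement (one-block/two-block) arguments in the form needed for the entropy method: using the entropy inequality
\begin{equation*}
\int g \, f^N_t \, d\pi^N_t \;\le\; \frac{1}{\gamma}\,\Bigl( \log \int e^{\gamma g}\, d\pi^N_t \;+\; H_N(t)\Bigr)\;,
\end{equation*}
together with the large deviation bound for the empirical average of $\tau^{N,\lambda(t)}_j h_a$ on mesoscopic boxes under $\pi^N_t$, one replaces each $\tau^{N,\lambda(t)}_j h_a$ by its expectation $\tilde h_a(\rho(t,j/N))$ up to an error which, after choosing $\gamma$ appropriately, contributes $o(N) + C H_N(t)$. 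The diffusivity and mobility then reconstitute themselves through the identity $D = \tilde{D}$, $\chi = \tilde\chi$ built from \eqref{63} and the decomposition of the current.

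Having replaced microscopic currents by their macroscopic counterparts, the bulk contribution to $(d/dt) H_N(t)$ reduces, up to $o(N) + C H_N(t)$, to a sum over sites involving $\partial_x(D(\rho)\partial_x \rho) - \partial_x(\chi(\rho) E)$ minus $\partial_t \rho$, which vanishes because $\rho$ solves \eqref{27}. The boundary contributions, including the terms from $r^{\lambda(t)}_{0,1}$ and $r^{\lambda(t)}_{N-1,N}$ and from the time dependence of $\pi^N_t$ at sites $1$ and $N-1$, are matched against the boundary conditions $f'(\rho(t,a)) = \lambda_a(t)$ using the smoothness assumptions on $\lambda(t)$ and the $C^{1+\beta/2, 2+\beta}$ regularity of $\rho$. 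Combining everything yields
\begin{equation*}
\frac{d}{dt} H_N(t) \;\le\; o_N(N) \;+\; C(t) \, H_N(t) \;,
\end{equation*}
and Gronwall's lemma, together with the hypothesis $H_N(\mu_N \mid \nu^N_{\gamma(\cdot)}) = o(N)$, gives the claim.

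I expect the main obstacle to be the rigorous justification of the replacement step in the presence of boundaries: the one- and two-block arguments must be carried out with the inhomogeneous measure $\pi^N_t$ (whose density varies in space and time) and with the modified operators $\tau^{N,\lambda(t)}_j$, which treat occupation variables outside $\Lambda_N$ as the boundary densities $\alpha_a(t)$. Controlling the contributions near the two boundary bonds uniformly in time, and checking that the weak asymmetry $e^{E/2N}$ produces only negligible corrections after the gradient decomposition, will require care; all the other ingredients follow the now-standard entropy method of \cite{y,emy}.
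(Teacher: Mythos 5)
Your outline is correct and follows essentially the same route as the paper: Yau's relative entropy method with the entropy production computation, the gradient decomposition and summation by parts, replacement of cylinder functions by block averages, the entropy inequality with a concentration bound under the inhomogeneous product reference measure, cancellation of the linear term through the hydrodynamic equation, and Gronwall. The only organizational difference is that the paper does not prove Theorem \ref{mt3} directly but obtains it as the special case $\epsilon_N=\ell_N=1$ of Theorem \ref{mt7}, whose proof is precisely the argument you sketch (with the replacement step carried out quantitatively via the Dirichlet form bound of Lemma \ref{s03} and the path lemma in Lemma \ref{s02}).
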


\begin{corollary}
\label{mt4}
Under the assumptions of Theorem \ref{mt3}, for every $t\ge 0$, every
continuous function $H:[0,1]\to \bb R$, and every cylinder function
$\Psi:\{0,1\}^{\bb Z} \to \bb R$,
\begin{equation*}
\lim_{N\to\infty} E_{\mu_N S^N_t} \Big[ \, \Big| \frac 1N \sum_{k=1}^{N-1} H(k/N)
\, (\tau^{N,\lambda(t)}_k \Psi)(\eta) - \int_0^1 H(x)\,
E_{\nu_{\rho(t,x)}} [\Psi] \, dx \Big| \, \Big] \;=\; 0\;.
\end{equation*}
\end{corollary}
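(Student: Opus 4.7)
The plan is to deduce the corollary from Theorem \ref{mt3} via the standard entropy inequality combined with a sub-Gaussian concentration estimate under the reference product measure. Set $\widehat\Psi(\rho) := E_{\nu_\rho}[\Psi]$ and
\begin{equation*}
Y_N(\eta) \;:=\; \frac{1}{N}\sum_{k=1}^{N-1} H(k/N)\,(\tau^{N,\lambda(t)}_k\Psi)(\eta) \;-\; \int_0^1 H(x)\,\widehat\Psi(\rho(t,x))\,dx,
\end{equation*}
so the goal is $E_{\mu_N S^N_t}[|Y_N|]\to 0$. Applying the entropy inequality with $\mu=\mu_N S^N_t$, $\pi=\nu^N_{\rho(t,\cdot)}$, and scale $aN$ gives
\begin{equation*}
E_{\mu_N S^N_t}[|Y_N|] \;\le\; \frac{1}{aN}\,\log E_{\nu^N_{\rho(t,\cdot)}}\bigl[e^{aN|Y_N|}\bigr] \;+\; \frac{1}{aN}\, H_N(\mu_N S^N_t \,|\, \nu^N_{\rho(t,\cdot)})
\end{equation*}
for every $a>0$. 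By Theorem \ref{mt3} the entropy term is $o(1)$, reducing the problem to bounding the exponential moment by $O(a)$ in the limit.

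Next I would decompose $Y_N = m_N + F_N$ with $m_N := E_{\nu^N_{\rho(t,\cdot)}}[Y_N]$ deterministic and $F_N$ centered under $\nu^N_{\rho(t,\cdot)}$. The convergence $m_N\to 0$ follows from a Riemann sum argument: since $\nu^N_{\rho(t,\cdot)}$ is the product measure with marginals of density $\rho(t,j/N)$ and $\rho(t,\cdot)$ is continuous on $[0,1]$, the expectations $E_{\nu^N_{\rho(t,\cdot)}}[\tau^{N,\lambda(t)}_k\Psi]$ converge uniformly in $k$ (away from an $O(1)$ boundary layer) to $\widehat\Psi(\rho(t,k/N))$, and continuity of $H$ then yields $m_N\to 0$. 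For $F_N$, since $\Psi$ has support in some fixed window $\{-A,\dots,A\}$, the summands form a $(2A)$-dependent sequence of centered, uniformly bounded random variables under the product measure; partitioning the indices into $2A+1$ independent subsequences and applying Hoeffding's inequality to each yields a sub-Gaussian estimate
\begin{equation*}
E_{\nu^N_{\rho(t,\cdot)}}\bigl[e^{aN|F_N|}\bigr] \;\le\; 2\exp(C a^2 N)
\end{equation*}
with $C$ depending only on $A$, $\|H\|_\infty$, $\|\Psi\|_\infty$.

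Combining the two bounds through $e^{aN|Y_N|}\le e^{aN|m_N|}e^{aN|F_N|}$ and dividing by $aN$ shows $\limsup_N E_{\mu_N S^N_t}[|Y_N|] \le Ca$ for every $a>0$, and then letting $a\downarrow 0$ finishes the argument. No individual step is difficult; the only bookkeeping that requires mild care is the handling of the $O(1)$ boundary indices where $\tau^{N,\lambda(t)}_k$ substitutes $\alpha_0(t)$ or $\alpha_1(t)$ instead of reading an occupation variable, but these perturb $Y_N$ only by $O(1/N)$ and are absorbed into $m_N$. This is the standard way in which the $o(N)$ entropy bound of Theorem \ref{mt3} promotes to local equilibrium in the spirit of the Kipnis--Landim monograph.
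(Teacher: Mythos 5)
Your proposal is correct and follows essentially the same route as the paper, whose proof of Corollary \ref{mt4} simply invokes the standard argument of Corollary 1.3, Chapter 6 of the Kipnis--Landim monograph: the entropy inequality at scale $aN$ against the local equilibrium measure $\nu^N_{\rho(t,\cdot)}$, the $o(N)$ entropy bound from Theorem \ref{mt3}, and an exponential moment (Hoeffding-type) estimate for block-independent bounded cylinder variables under the inhomogeneous product measure, followed by $a\downarrow 0$. Your explicit handling of the mean $m_N$ via Riemann sums and of the $O(1)$ boundary indices matches what that reference argument requires, so no gap.
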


\subsection{Quasi-static transformations}

Fix $\nu>0$, a function $\lambda$ in $C^1(\bb R_+)$, and let $\alpha:
\bb R_+\to (0,1)$ be given by
\begin{equation}
\label{59}
\alpha(t) \;=\; f'(\lambda(t))\;,
\end{equation}
Fix a function $v_0 = v^\nu_0$ in $C^2_0([0,1])$, and assume that
there exists a function $\psi$ in $C^{1+\beta/2,2+\beta}(\bb R_+\times
[0,1])$, for some $\beta>0$, such that $\psi(t,0) = \psi(t,1) =
\alpha(t)$, $t\ge 0$, $\psi(0,x) = \alpha(0) + \nu^{-1} v_0(x)$, $x\in
[0,1]$, and
\begin{equation*}
\partial_t \psi \;=\; \nu\, \partial_x ( D(\psi) \partial_x \psi) 
\quad \text{at $(t,x)=(0,0)$ and $(t,x)=(0,1)$.}
\end{equation*}
This means that we assume that
\begin{equation*}
\alpha'(0) \;=\; \partial_x \Big\{ D \big (\alpha(0) + 
\nu^{-1} v_0(a) \big) \, \partial_x v_0(a) \Big\} \quad\text{for
  $a=0$, $1$}\;.
\end{equation*}
Denote by $\rho(t,x)=\rho_{\nu}(t,x)$ the unique classical solution of
the initial--boundary value problem
\begin{equation*}
\left\{
\begin{split}
& \partial_t \rho = \nu \, \partial_x ( D(\rho) 
\partial_x \rho) \;, \\
& \rho(t,0) = \rho(t,1) =  \alpha (t)\;, \\
& \rho(0,x) = \alpha(0) + \nu^{-1} v_0\;.
\end{split}
\right.
\end{equation*}

Let $u_\nu : \bb R_+ \times [0,1] \to \bb R$ be the function given by
\begin{equation*}
u_\nu (t,x) \;=\; \nu\, \big\{ \rho_\nu (t,x) - \alpha(t) \big\}  \;,
\end{equation*}
and, for each $t\ge 0$, let $v_t: [0,1] \to \bb R$ be the unique
solution of the linear elliptic equation
\begin{equation}
\label{60}
\left\{
\begin{split}
& \partial_x (D(\alpha(t)) \partial_x v_t) = \alpha'(t)\;, \\
& v_t(0)=v_t(1) =0\;.
\end{split}
\right.
\end{equation}

\begin{proposition}
\label{s16}
Assume that $\lambda$ belongs to $C^2(\bb R_+)$ and that $v_0$ belongs
to $C^4_0([0,1])$. Then, for each $t\ge 0$,
\begin{equation*}
\lim_{\nu\to\infty} \int_0^1 [u_\nu(t,x) - v_t(x)]^2\, dx\;=\; 0\;.
\end{equation*}
\end{proposition}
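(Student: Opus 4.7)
The plan is an energy/Gronwall argument on the difference $w_\nu(t,x) := u_\nu(t,x) - v_t(x)$. The key observation is that on the natural macroscopic time scale the effective diffusion operator acting on $u_\nu$ carries the factor $\nu$, so $u_\nu$ relaxes to the instantaneous quasi-static equilibrium $v_t$ at exponential rate $e^{-c\nu t}$, while the deviation produced by the slow drift of $\alpha(t)$ is only of order $\nu^{-1}$.

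First I would derive the equation satisfied by $u_\nu$. Substituting $\rho_\nu = \alpha(t)+\nu^{-1}u_\nu$ into the parabolic equation and using the Dirichlet data $\rho_\nu(t,a)=\alpha(t)$ for $a=0,1$ yields
\begin{equation*}
\nu^{-1}\partial_t u_\nu \;+\; \alpha'(t) \;=\; \partial_x\bigl\{ D\bigl(\alpha(t)+\nu^{-1}u_\nu\bigr)\,\partial_x u_\nu\bigr\}\,,\qquad u_\nu(t,0)=u_\nu(t,1)=0\,,
\end{equation*}
with initial datum $u_\nu(0,\cdot)=v_0$. Multiplying by $u_\nu$, integrating in $x$, integrating by parts using the Dirichlet boundary condition, bounding $D\ge D_{\min}>0$, absorbing $\int\alpha'(t)u_\nu\,dx$ via Young, and applying Poincar\'e gives a uniform bound $\sup_{\nu,\,t\in[0,T]}\|u_\nu(t)\|_{L^2}\le C(T)$. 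The classical parabolic regularity of \cite{lsu}, available thanks to the $C^{1+\beta/2,2+\beta}$ ansatz $\psi$ and the corner compatibility conditions stated in the setup, then also keeps $\rho_\nu$ in a fixed compact subset of $(0,1)$ so that $D(\rho_\nu)$ is uniformly bounded above and below.

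Next I would subtract the elliptic equation $\partial_x(D(\alpha(t))\partial_x v_t)=\alpha'(t)$ defining $v_t$ to obtain
\begin{equation*}
\nu^{-1}\partial_t w_\nu \;=\; \partial_x\{D(\rho_\nu)\partial_x w_\nu\} \;+\; \partial_x\bigl\{[D(\rho_\nu)-D(\alpha(t))]\,\partial_x v_t\bigr\} \;-\; \nu^{-1}\partial_t v_t\,,
\end{equation*}
still with Dirichlet zero boundary. Testing against $w_\nu$ and integrating by parts produces the dissipation $\int D(\rho_\nu)(\partial_x w_\nu)^2 dx \ge c_0\|\partial_x w_\nu\|_{L^2}^2$. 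Using the Lipschitz estimate $|D(\rho_\nu)-D(\alpha(t))|\le C\nu^{-1}|u_\nu|$, Young's inequality absorbs the cross term into the dissipation at cost $O(\nu^{-2}\|u_\nu\|_{L^2}^2\|\partial_x v_t\|_{L^\infty}^2)$ and the transient $\nu^{-1}\partial_t v_t$ at cost $O(\nu^{-3})$. After multiplication by $\nu$ and a Poincar\'e step one obtains
\begin{equation*}
\frac{d}{dt}\|w_\nu(t)\|_{L^2}^2 \;+\; c\,\nu\,\|w_\nu(t)\|_{L^2}^2 \;\le\; C\nu^{-1}\bigl(1+\|u_\nu(t)\|_{L^2}^2\bigr)\,,
\end{equation*}
so by Gronwall and the uniform bound from the previous step,
\begin{equation*}
\|w_\nu(t)\|_{L^2}^2 \;\le\; e^{-c\nu t}\,\|w_\nu(0)\|_{L^2}^2 \;+\; O(\nu^{-2})\,,
\end{equation*}
which tends to $0$ as $\nu\to\infty$ for every $t>0$, proving the claim.

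The main obstacle I anticipate is the uniform pointwise confinement $\rho_\nu\in[\delta,1-\delta]$ that underlies both the Lipschitz control on $D$ and the validity of the integrations by parts; this relies on invoking the full parabolic regularity of \cite{lsu} and the stated compatibility of $v_0$ with $\alpha$ at the corners, not only the $L^2$ energy. A secondary point is the initial layer: $w_\nu(0) = v_0 - v_{t=0}$ does not vanish as $\nu\to\infty$ in general, but the factor $e^{-c\nu t}$ suppresses any fixed initial discrepancy for any positive $t$, which is why the conclusion is stated pointwise in $t$.
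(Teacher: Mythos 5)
Your proof is correct, and while it shares the paper's overall skeleton --- an energy/Gronwall estimate for $\int_0^1 [u_\nu(t)-v_t]^2\,dx$, with dissipation from the Dirichlet form plus Poincar\'e --- it differs in the one step that matters. The paper decomposes the flux difference as $D(\alpha_t)\,\partial_x(u_\nu-v_t)+[D(\alpha_t+\nu^{-1}u_\nu)-D(\alpha_t)]\,\partial_x u_\nu$, so its error term is $\nu^{-1}\int_0^1 u_\nu^2\,(\partial_x u_\nu)^2\,dx$, and controlling it requires the uniform bounds $\Vert u_\nu(t)\Vert_\infty\le B$ and $\int_0^1[\partial_x u_\nu(t)]^2dx\le B$ of Proposition \ref{s17}, i.e.\ the whole chain of Assertions \ref{s04}--\ref{s08} and Lemmas \ref{s18}, \ref{s14}; this is where the hypotheses $\lambda\in C^2$, $v_0\in C^4_0$ are consumed. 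You instead write it as $D(\rho_\nu)\,\partial_x w_\nu+[D(\rho_\nu)-D(\alpha_t)]\,\partial_x v_t$, so after Young the error only involves $\Vert u_\nu\Vert_{L^2}^2\,\Vert\partial_x v_t\Vert_\infty^2$ and $\Vert\partial_t v_t\Vert_{L^2}^2$; the $L^2$ bound on $u_\nu$ follows from the elementary first energy estimate you sketch (the analogue of Assertion \ref{s04}), and the bounds on $v_t$ are immediate because $\alpha(t)$ is spatially constant, so $v_t(x)=\tfrac{\alpha'(t)}{2D(\alpha(t))}\,x(x-1)$ explicitly and only $\lambda\in C^2$ is used there. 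Your route is therefore more elementary and self-contained for this proposition (Propositions \ref{s17} and \ref{s19} remain necessary elsewhere, namely for conditions \eqref{51}--\eqref{52} in the proof of Theorem \ref{mt5}). Two small remarks: the cost of the transient term $\nu^{-1}\partial_t v_t$ is $O(\nu^{-2})$ before multiplying by $\nu$, not $O(\nu^{-3})$ --- immaterial, since your final differential inequality and the bound $\Vert w_\nu(t)\Vert_{L^2}^2\le e^{-c\nu t}\Vert w_\nu(0)\Vert_{L^2}^2+O(\nu^{-2})$ are right; and the confinement worry is largely moot, since $D$ is a polynomial with $\inf_{[0,1]}D>0$ (strict positivity of the rate $c$), hence Lipschitz and uniformly elliptic on all of $[0,1]$ without invoking \cite{lsu} beyond the existence of the classical solution. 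The $t=0$ caveat you raise (the argument only suppresses the initial discrepancy for $t>0$) is shared by the paper's own proof.
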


One can strengthen the topology in which the convergence occurs, but
we do not seek optimal conditions here.

Inspired by the previous result, consider a function $\lambda$ in
$C^1(\bb R_+)$, and let $\alpha: \bb R_+\to (0,1)$ be given by
\eqref{59}. Fix a sequence $\epsilon_N$ which vanishes as $N\to\infty$
and a function $\gamma=\gamma_N$ in $C^2_0([0,1])$. Assume that there
exists a function $\psi$ in $C^{1+\beta/2,2+\beta}(\bb R_+\times
[0,1])$, for some $\beta>0$, such that $\psi(t,0) = \psi(t,1) =
\alpha(t)$, $t\ge 0$, $\psi(0,x) = \alpha(0) + \epsilon_N \gamma(x)$,
$x\in [0,1]$, such that
\begin{equation*}
\alpha'(a) \;=\; \partial_x \Big\{ D \big (\alpha(a) + 
\epsilon_N \gamma (a) \big) \, \partial_x \gamma(a) \Big\} \quad\text{for
  $a=0$, $1$}\;.
\end{equation*}
Denote by $\rho_N(t,x)$ the solution of
\begin{equation}
\label{29}
\left\{
\begin{split}
& \partial_t \rho = \epsilon^{-1}_N \partial_x ( D(\rho) \partial_x
\rho) \;, \\
& \rho(t,0) = \rho(t,1) =  \alpha (t)\;, \\
& \rho(0,x) = \alpha(0) + \epsilon_N \gamma(x)\;.
\end{split}
\right.
\end{equation}

Denote by $\ms L_N(t)$ the generator $L_N$ introduced in \eqref{01}
with $E=0$ and $\lambda_0=\lambda_1=\lambda(t)$. Let $\{T^N_t :t\ge
0\}$ be the semigroup associated to the generator $\epsilon^{-1}_N N^2
\ms L_N (t)$. Note that time has been speed-up by $\epsilon^{-1}_N
N^2$.

\begin{theorem}
\label{mt5}
Assume that $\epsilon^4_N N\to \infty$, that $\lambda$ belongs to
$C^2(\bb R_+)$, and that $\gamma$ belongs to $C^4_0([0,1])$.  Let
$\{\mu_N: N\ge 1\}$ be a sequence of probability measures, $\mu_N\in
\ms M_N$, such that
\begin{equation}
\label{32}
\lim_{N\to\infty} \frac 1{N \epsilon_N^2} \, H_N( \mu_N | \nu^N_{\rho_N(0,\cdot)})
\;=\;0\;. 
\end{equation}
Then, for every $t>0$,
\begin{equation*}
\lim_{N\to\infty} \frac 1{N \epsilon_N^2} \, H_N(\mu_N T^N_t | 
\nu^N_{\rho_N (t,\cdot)}) \;=\;0\;.
\end{equation*}
\end{theorem}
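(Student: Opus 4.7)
The plan is to follow Yau's relative entropy method \cite{y}, refined to the sharper scale $N\epsilon_N^2$ as in \cite{emy, jan}. Write $\mu_N^t := \mu_N T_t^N$, $\nu_N^t := \nu^N_{\rho_N(t,\cdot)}$, $f_N^t := d\mu_N^t/d\nu_N^t$, and $H_N(t) := H_N(\mu_N^t | \nu_N^t)$. Differentiating in time, using that $\mu_N^t$ solves the forward equation for $\epsilon_N^{-1} N^2 \ms L_N(t)$ and that $\nu_N^t$ is a product measure whose density evolves through $\partial_t \rho_N = \epsilon_N^{-1} \partial_x (D(\rho_N) \partial_x \rho_N)$, one obtains an inequality of the form
\begin{equation*}
\frac{d}{dt} H_N(t) \;\leq\; \int V_N^t(\eta)\, f_N^t(\eta)\, d\nu_N^t(\eta)\;,
\end{equation*}
where $V_N^t$ is an explicit local functional measuring the discrepancy between the microscopic generator and the hydrodynamic equation \eqref{29}.

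Next I would decompose $V_N^t$ by writing $\rho_N(t,x) = \alpha(t) + \epsilon_N v_N(t,x)$, with $v_N$ smooth in $(t,x)$ and uniformly bounded. The chemical potential $\log[\rho_N/(1-\rho_N)]$ then expands around $\log[\alpha/(1-\alpha)]$ with increments of order $\epsilon_N/N$ between neighboring sites. After summation by parts and an application of the gradient condition \eqref{06}--\eqref{35}, the generator contribution rewrites as a sum of microscopic currents $w_{j,j+1}^{N,\lambda(t)}$ tested against discrete second derivatives of the chemical potential, while the time-derivative of the density of $\nu_N^t$ produces, by \eqref{29}, a discrete Laplacian of $D(\rho_N)\partial_x \rho_N$ paired with $\eta - \rho_N$. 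These two contributions should cancel at leading order once the microscopic currents are replaced by $D(\rho_N)$ times discrete gradients of the configuration.

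The core of the argument is therefore a replacement lemma sharper than in the purely diffusive case: local averages of the $h_a$'s must be replaced by the transport coefficient evaluated at the empirical density, with error bounded at the scale $N\epsilon_N^2$. Using the entropy inequality $\int F g \, d\nu \leq \gamma^{-1}[\log \int e^{\gamma F}\, d\nu + H(g\nu|\nu)]$ with $\gamma$ of order $\epsilon_N^{-2}$, this reduces to a super-exponential estimate of the form
\begin{equation*}
\log \int \exp\big(\gamma\, V_N^t\big)\, d\nu_N^t \;=\; o(N\epsilon_N^2)\;,
\end{equation*}
which in turn requires Taylor expanding $D(\rho_N)$ around $\alpha(t)$ up to second order in $\epsilon_N$ and bounding the cubic remainder via a local central limit theorem for Bernoulli product measures. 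The hypothesis $\epsilon_N^4 N \to \infty$ is precisely what makes this cubic error $o(N\epsilon_N^2)$.

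Once the replacement is in place, $\int V_N^t f_N^t\, d\nu_N^t \leq C(N\epsilon_N^2)^{-1} H_N(t) + o(N\epsilon_N^2)$, so Gronwall's inequality combined with the initial bound \eqref{32} closes the argument. The main obstacle I anticipate is the super-exponential estimate near the boundary: the rates $c_{0,1}^{N,\lambda,0}$ and $c_{N-1,N}^{N,\lambda,0}$ differ structurally from the bulk rates, so the translation-invariance arguments used in the bulk do not apply near the endpoints, and one has to track separately the contribution of the boundary sites at the finer scale $\epsilon_N^2$.
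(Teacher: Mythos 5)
Your overall strategy (Yau's relative entropy method at the refined scale $N\epsilon_N^2$, followed by Gronwall) is the same as the paper's, which proves a general statement (Theorem \ref{mt7}) and deduces Theorem \ref{mt5} from it. However, your outline has a genuine gap at the very start: you write $\rho_N(t,x)=\alpha(t)+\epsilon_N v_N(t,x)$ ``with $v_N$ smooth and uniformly bounded'' and let the expansion proceed from there. What is actually needed, and what is not at all automatic, is that $\partial_x f'(\rho_N)$ and $\partial^2_x f'(\rho_N)$ are $O(\epsilon_N)$ uniformly in $t\in[0,T]$ and $N$; otherwise the entropy production is of order $N\epsilon_N^{-1}$ rather than $o(N\epsilon_N^2)$ plus a Gronwall term, and no replacement lemma can save it. Since \eqref{29} carries the large factor $\epsilon_N^{-1}$ and a moving boundary datum $\alpha(t)$, one must prove quantitatively that the solution tracks $\alpha(t)$ within $O(\epsilon_N)$ in $C^2$, balancing exponential relaxation at rate of order $\epsilon_N^{-1}$ against the $O(1)$ motion of the boundary; this is exactly the content of conditions \eqref{51}--\eqref{52} and of Propositions \ref{s17} and \ref{s19}, whose proofs (energy estimates for $\partial_x d(\rho)$, $\partial^2_x d(\rho)$, $\partial^3_x d(\rho)$ with Poincar\'e and Gronwall) take a full section and are the reason Theorem \ref{mt5} assumes $\lambda\in C^2$ and $\gamma\in C^4_0$. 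Your proposal never engages with this step.

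The second gap is in your ``core'' replacement step. You propose to reduce it to a static bound $\log\int\exp(\gamma V^t_N)\,d\nu^t_N=o(N\epsilon_N^2)$ via the entropy inequality. This cannot work for the linear fluctuation fields $\sum_j G(t,j/N)\{\tau_j h-\hat h(\rho_N)-\hat h'(\rho_N)[\eta_j-\rho_N]\}$: their per-site variance under the product measure is of order one, so the static log-moment is of order $N$ for any fixed entropy-inequality constant, and shrinking that constant to compensate destroys the Gronwall closure at scale $N\epsilon_N^2$. The replacement of $\tau_j h$ by $\hat h(\eta^{K_N}(j))$ must be done dynamically: in the paper, Lemma \ref{s03} bounds the time-integrated Dirichlet form of $\sqrt{g_s}$, and Lemma \ref{s02} (a one-block estimate with the path lemma, with extra care because the reference measure is an inhomogeneous product measure and the chain is boundary driven) yields an error $O(K_N/N+1/K_N)$; only the remaining quadratic term $[\eta^{K_N}(j)-\rho_N]^2$, obtained after a second-order Taylor expansion of $\hat h$, is handled by the static entropy inequality together with a sub-Gaussian moment bound for block averages (no local CLT is needed). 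Relatedly, your accounting of the hypothesis is off: $\epsilon_N^4N\to\infty$ enters through the block-size window $\epsilon_N^{-2}=M_N\ll K_N$ and $M_NK_N\ll N$ (and $M_N\ell_N\ll N$), not through a cubic Taylor remainder of $D$, which by itself would only require $N\gg\epsilon_N^{-10/3}$. Finally, the boundary generators you flag as the main obstacle are comparatively harmless in the correct scheme, because their coefficients $H_\pm=F_N(t,0),F_N(t,1)$ are themselves $O(\epsilon_N)$ by \eqref{52} and the boundary replacement in Lemma \ref{s02} costs only $O(N^{-1/2})$ after time integration.
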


\begin{corollary}
\label{mt6}
Under the assumptions of Theorem \ref{mt5}, for every $t\ge 0$, every
continuous function $H:[0,1]\to \bb R$, and every cylinder function
$\Psi:\{0,1\}^{\bb Z} \to \bb R$,
\begin{equation*}
\lim_{N\to\infty} E_{\mu_N T^N_t} \Big[ \, \frac 1{ \epsilon_N} \Big| 
\frac 1{N} \sum_{k=1}^{N-1} H(k/N)\,
(\tau^{N,\lambda(t)}_k \Psi)(\eta) - \int_0^1 H(x) 
\, E_{\nu_{v_N(t,x)}} [\Psi] \, dx
\Big| \, \Big] \;=\; 0\;,
\end{equation*}
where $v_N(t,x) = \alpha(t) + \epsilon_N v(t,x)$, $v(t,x)$ being the
unique classical solution of the elliptic equation \eqref{60}.
\end{corollary}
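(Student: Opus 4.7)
The plan is to deduce Corollary \ref{mt6} from Theorem \ref{mt5} by the standard entropy inequality, combined with Proposition \ref{s16} and a Hoeffding concentration bound under the product reference measure. Set $\tilde\Psi(\rho) := E_{\nu_\rho}[\Psi]$,
$$X_N(\eta) \;:=\; \frac 1N \sum_{k=1}^{N-1} H(k/N)\, (\tau^{N,\lambda(t)}_k \Psi)(\eta)\;, \qquad \bar X_N \;:=\; \int_0^1 H(x)\, \tilde\Psi(v_N(t,x))\, dx\;,$$
and let $\pi_N := \nu^N_{\rho_N(t,\cdot)}$. Decompose $X_N - \bar X_N = Z_N + d_N$, where $Z_N := X_N - E_{\pi_N}[X_N]$ is centred under $\pi_N$ and $d_N := E_{\pi_N}[X_N] - \bar X_N$ is deterministic; it suffices to show that both $\epsilon_N^{-1} E_{\mu_N T^N_t}[|Z_N|]$ and $\epsilon_N^{-1} |d_N|$ vanish as $N\to\infty$.

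The control of $d_N$ is purely analytic. The product structure of $\pi_N$, the finiteness of the support of $\Psi$, and the boundary matching $\rho_N(t,0)=\rho_N(t,1)=\alpha(t)$ give $E_{\pi_N}[(\tau^{N,\lambda(t)}_k \Psi)(\eta)] = \tilde\Psi(\rho_N(t, k/N)) + O(1/N)$ uniformly in $k$, so a Riemann sum comparison yields $E_{\pi_N}[X_N] = \int_0^1 H(x)\tilde\Psi(\rho_N(t,x))\, dx + O(1/N)$. Proposition \ref{s16}, applied with $\nu = \epsilon_N^{-1}$ and $v_0 = \gamma$, furnishes $\|\rho_N(t,\cdot) - v_N(t,\cdot)\|_{L^2} = \epsilon_N\, o(1)$, and the Lipschitz continuity of the polynomial $\tilde\Psi$ then gives $|d_N| \le \epsilon_N \cdot o(1) + O(1/N)$. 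Dividing by $\epsilon_N$ and using $N\epsilon_N \gg N^{3/4} \to \infty$ (a consequence of $\epsilon_N^4 N \to \infty$), we obtain $\epsilon_N^{-1}|d_N| \to 0$.

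For $Z_N$, I would apply the entropy inequality with reference $\pi_N$ and parameter $\theta = N\epsilon_N^2/a$ for fixed $a>0$:
$$E_{\mu_N T^N_t}\big[\epsilon_N^{-1}|Z_N|\big] \;\le\; \frac a{N\epsilon_N^2}\, H_N(\mu_N T^N_t \,|\, \pi_N) \;+\; \frac a{N\epsilon_N^2}\,\log E_{\pi_N}\!\left[ e^{(N\epsilon_N/a)\, |Z_N|} \right].$$
Theorem \ref{mt5} makes the first term $a\cdot o(1)$. For the second, partition $\{1,\dots,N-1\}$ into $2r+1$ residue classes modulo $2r+1$, where $r$ bounds the radius of the support of $\Psi$; within each class the centred increments $H(k/N)[(\tau^{N,\lambda(t)}_k\Psi)(\eta) - E_{\pi_N}[\cdots]]/N$ are independent under $\pi_N$ and uniformly bounded by $C/N$. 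Hoeffding's inequality inside each class together with H\"older's inequality across classes yields $\log E_{\pi_N}[e^{\beta Z_N}] \le C\beta^2/N$ for every $\beta\in\bb R$, hence (up to doubling the constant) the same bound for $E_{\pi_N}[e^{\beta |Z_N|}]$ via $|Z_N| \le \max(Z_N,-Z_N)$. Plugging $\beta = N\epsilon_N/a$ makes the second term at most $C/a$, so that $E_{\mu_N T^N_t}[\epsilon_N^{-1}|Z_N|] \le a\cdot o(1) + C/a$; letting $N\to\infty$ and then $a\to\infty$ concludes the proof. The main obstacle, and the reason the super-diffusive scale is essential, is the matching of scales: the sharp $o(N\epsilon_N^2)$ entropy estimate of Theorem \ref{mt5} is precisely what allows the Gaussian Hoeffding bound $C\beta^2/N = CN\epsilon_N^2/a^2$ at height $\beta = N\epsilon_N/a$ to close the argument, and any weaker entropy bound would leave a nonvanishing residual in the concentration step.
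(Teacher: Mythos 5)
Your argument is correct and follows essentially the same route as the paper: the paper proves the corollary by running the relative-entropy/concentration argument of Corollary 1.3, Chapter 6 of Kipnis--Landim at the finer scale $\epsilon_N^{-1}$ (exactly your entropy-inequality-plus-Hoeffding step with $\beta=N\epsilon_N/a$, which closes only because of the $o(N\epsilon_N^2)$ bound of Theorem \ref{mt5}), and then replaces $\nu_{\rho_N(t,\cdot)}$ by $\nu_{v_N(t,\cdot)}$ just as you do, bounding $\epsilon_N^{-1}\int_0^1|\rho_N(t,x)-v_N(t,x)|\,dx=\int_0^1|u_N(t,x)-v(t,x)|\,dx$ via Proposition \ref{s16}. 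The only caveat---shared by the statement itself---is that your ``$O(1/N)$'' Riemann-sum comparison really carries an error $O(\omega_H(1/N))$ for merely continuous $H$, which must be $o(\epsilon_N)$; strictly one should take $H$ Lipschitz (or smooth, or read the integral as the corresponding Riemann sum), a point the paper's own one-line reduction to Kipnis--Landim glosses over as well.
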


\section{Proof of the main results}

We present in Theorem \ref{mt7} below a general statement from which
one can easily deduce Theorems \ref{mt3} and \ref{mt5}.  For a fixed
chemical potential $\lambda=(\lambda_0,\lambda_1)$ and a continuous
external field $E:[0,1]\to\bb R$, denote by $\bar \rho_{\lambda,
  E}:[0,1]\to\bb R$ the solution of the elliptic equation
\begin{equation}
\label{57}
\left\{
\begin{split}
  & \partial_x ( D(\rho) \partial_x
  \rho) - \partial_x (\chi(\rho) E) \;=\;0 \;, \\
  & f'(\rho(0)) =  \lambda_0\;, \;\; f'(\rho(1)) =  \lambda_1 \;,
  \end{split}
\right.
\end{equation}

Fix sequences $\{\epsilon_N :N\ge 1\}$, $\{\ell_N :N\ge 1\}$ such that
$\ell_N\to\infty$, $\epsilon_N\to 0$. Consider a time-dependent
external field $E$ in $C^{1,2}(\bb R_+\times [0,1])$ and a
time-dependent chemical potential $\lambda(t)=(\lambda_0(t),
\lambda_1(t))$ such that $\lambda_0$, $\lambda_1 \in C^{1}(\bb
R_+)$. Fix a density profile $\gamma=\gamma_N$ in $C^2([0,1])$ and
assume that there exists a function $\psi$ in
$C^{1+\beta/2,2+\beta}(\bb R_+\times [0,1])$, $\beta>0$, such that
$f'(\psi(t,a))=\lambda_a(t)$ for $a=0$, $1$, $\psi(0,x) =
\bar\rho_{\lambda(0), E(0)} (x) + \epsilon_N \gamma(x)$ for $x\in
[0,1]$, and such that
\begin{equation*}
\partial_t \psi = \ell_N \Big\{ \, \partial_x ( D(\psi) \partial_x \psi) 
- \partial_x (\chi(\psi) E(t)) \Big\} \quad
\text{at $(t,x)=(0,0)$ and $(t,x)=(0,1)$.}
\end{equation*}
Denote by $\rho_N(t,\cdot)$ the unique weak solution of the parabolic
equation
\begin{equation}
\label{34}
\left\{
\begin{split}
  & \partial_t \rho = \ell_N \Big\{ \partial_x ( D(\rho) \partial_x
  \rho) - \partial_x (\chi(\rho) E) \Big\} \;, \\
  & f'(\rho(t,0)) =  \lambda_0(t)\;, \;\; f'(\rho(t,1)) =  \lambda_1(t) \;, \\
  & \rho(0,x) = \bar\rho_{\lambda(0), E(0)} + \epsilon_N \gamma(x)\;.
\end{split}
\right.
\end{equation}

In Theorem \ref{mt7} the following conditions on the solution of
equation \eqref{34} are needed: For every $T>0$, there exists
$0<\delta<1$ such that
\begin{equation}
\label{51}
\delta \;\le\; \rho_N(t,x) \;\le\; 1-\delta \quad\text{for all}\quad 
0\le x\le 1\;,\;\; 0\le t\le T\;,\;\; N\ge1\;.
\end{equation}
To explain the second condition, observe that we may rewrite the PDE
\eqref{34} as
\begin{equation*}
\partial_t \rho = \ell_N \partial_x \big\{  \chi(\rho) 
\big[ \partial_x f'(\rho) -  E  \big] \big\} 
\end{equation*}
because $\chi(\rho) f''(\rho) = D(\rho)$ by Einstein relation
\eqref{53}. Let
\begin{equation*}
F_N(t,x) \;=\; \partial_x f'(\rho_N(t,x)) \;-\;  E(t,x)
\end{equation*}
We assume that for every $T>0$, there exists a finite
constant $C_0$ such that for all $N\ge 1$, $0\le t\le T$, 
\begin{equation}
\label{52}
\Vert F_N (t) \Vert_\infty \;\le\; \frac{C_0}{\ell_N} \;, \quad
\Vert \partial_x F_N (t) \Vert_\infty \;\le\; 
\frac{C_0}{\ell_N} \;.
\end{equation}
Note that for this condition to be fulfilled at $t=0$, we need $\ell_N
\epsilon_N$ to be bounded:
\begin{equation}
\label{56}
\ell_N \, \epsilon_N \;\le\; C_0
\end{equation}
for some finite constant $C_0$.

Consider two non-decreasing sequences $K_N$, $J_N$. We write 
\begin{equation*}
\text{$K_N\ll J_N$ if $K_N/J_N\to 0$ as $N\to\infty$.}
\end{equation*}

Recall that we denote by $L_N(t)$ the generator $L_N$ introduced in
\eqref{01} with $E(t)$, $\lambda(t)$ in place of $E$, $\lambda$,
respectively.  Let $\{\mf S^N_t :t\ge 0\}$ be the semigroup associated
to the generators $\{\ell_N N^2 L_N (s):s\ge 0\}$: $(d/dt) \mf S^N_t =
\ell_N N^2 \mf S^N_t L_N (t)$.

\begin{theorem}
\label{mt7}
Consider a continuous external field $E(t,x)$ and a continuous
chemical potential $\lambda(t)=(\lambda_0(t), \lambda_1(t))$.  Assume
that $\gamma$ belongs to $C^2_0([0,1])$, that conditions \eqref{51},
\eqref{52}, \eqref{56} hold, and that $\epsilon^{-4}_N \ll N$. Let
$\{\mu_N: N\ge 1\}$ be a sequence of probability measures, $\mu_N\in
\ms M_N$, such that
\begin{equation}
\label{36}
\lim_{N\to\infty} \frac 1{N \epsilon_N^2} \, H_N( \mu_N | \nu^N_{\rho_N(0,\cdot)})
\;=\;0\;. 
\end{equation}
Then, for every $t>0$,
\begin{equation*}
\lim_{N\to\infty} \frac 1{N \epsilon_N^2} \, 
H_N(\mu_N \mf S^N_t | \nu^N_{\rho_N(t,\cdot)})
\;=\;0\;.
\end{equation*}
\end{theorem}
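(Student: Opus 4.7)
The plan is to apply Yau's relative entropy method, sharpened to the super-diffusive scale $N\epsilon_N^2$. Write $\nu_t^N=\nu^N_{\rho_N(t,\cdot)}$, denote its density with respect to a fixed Bernoulli reference measure by $\psi_t^N$, and set $H_N(t)=H_N(\mu_N\mf{S}^N_t\mid \nu_t^N)$. Differentiation along the time-inhomogeneous forward evolution yields the well-known inequality
\begin{equation*}
\frac{d}{dt}H_N(t) \;\le\; \int V_N^t \, d(\mu_N\mf{S}^N_t)\;,\qquad
V_N^t \;:=\; \frac{\ell_N N^2\, L_N^*(t)\psi_t^N}{\psi_t^N}\;-\;\partial_t\log\psi_t^N\;.
\end{equation*}
Combining with the entropy inequality at parameter $N\epsilon_N^2$ and Gronwall's lemma reduces the theorem, via hypothesis \eqref{36}, to the uniform in $t\in[0,T]$ estimate
\begin{equation*}
\log \int \exp\bigl(N\epsilon_N^2\, V_N^t\bigr)\,d\nu_t^N \;=\; o\bigl(N\epsilon_N^2\bigr)\;.
\end{equation*}

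The next step is to compute $V_N^t$ explicitly and exhibit the PDE cancellation. Since $\nu_t^N$ is a product, $\partial_t\log\psi_t^N$ is a sum over sites of affine functions of $\eta(j)$ with coefficients driven by $\partial_t\rho_N(t,j/N)$. The quantity $L_N^*(t)\psi_t^N/\psi_t^N$, once rewritten via the gradient identity \eqref{35} and a discrete summation by parts, appears as a sum of local currents $w^{N,\lambda(t)}_{j,j+1}$ weighted by $F_N(t,j/N)$, which is of size $\ell_N^{-1}$ by \eqref{52}. Taylor-expanding the currents around $\rho_N(t,\cdot)$ cancels the leading pieces of order $\ell_N N^2$ against those coming from $\partial_t\log\psi_t^N$, precisely because $\rho_N$ solves the PDE \eqref{34}. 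What remains is a first-order correction of size $\epsilon_N$ per site, coming from $\rho_N-\bar\rho_{\lambda(t), E(t)}=O(\epsilon_N)$, together with lattice discretization errors of size $N^{-1}$.

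The main difficulty is carrying out the local replacement at this refined scale. Each cylinder function $(\tau^{N,\lambda(t)}_{j-k}h_a)(\eta)$ must be replaced by its conditional expectation $\tilde h_a(\bar\eta_j^\ell)$ over a mesoscopic box of size $\ell$, and $\tilde h_a$ then Taylor-expanded about $\rho_N(t,j/N)$. The classical one- and two-block estimates produce, on the exponential scale $N\epsilon_N^2$, errors whose smallness requires $\ell$ to lie in a window that is non-empty exactly when $\epsilon_N^{-4}\ll N$, the hypothesis of the theorem. The sharpened replacement estimate itself is obtained by applying the logarithmic Sobolev inequality for the symmetric part of $L_N$, whose spectral gap is $O(N^{-2})$, to the Feynman--Kac functional $\exp(N\epsilon_N^2 V_N^t)$; conditions \eqref{51}--\eqref{52} and the $C^{1+\beta/2,2+\beta}$ regularity of the profile make the second-order Taylor remainder $O(\epsilon_N^2)$ per site uniformly in $t\in[0,T]$, feeding precisely the $o(N\epsilon_N^2)$ estimate required. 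Gronwall's inequality then propagates \eqref{36} from $t=0$ to any $t>0$, closing the argument.
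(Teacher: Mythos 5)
Your Step 1 (Yau's entropy production inequality plus the explicit computation of $V_N^t$ via the gradient condition and the PDE cancellation) matches the paper. The gap is in the reduction that follows. You reduce everything to the static bound $\log \int \exp(N\epsilon_N^2 V_N^t)\,d\nu_t^N = o(N\epsilon_N^2)$, and this bound is false. After the cancellation, $V_N^t$ still contains the boundary terms ($\ell_N I_2$, $\ell_N I_3$ in the paper's notation), which are of the form $N\,F_N(t,a)\,\ell_N$ times a bounded local function at a single boundary site; since $F_N\ell_N = O(1)$, these terms are of size $N$ pointwise with order-one fluctuations under the product measure $\nu_t^N$, so their log-moment at parameter $N\epsilon_N^2$ is of order $N^2\epsilon_N^2$, not $o(N\epsilon_N^2)$. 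The same problem occurs in the bulk for the un-replaced fluctuation terms $\tau_j h - \hat h(\rho(t,j/N))$: summed over $N$ sites they produce a log-moment of order $(N\epsilon_N^2)^2 N$. This is precisely why the paper does \emph{not} run the replacement through an exponential estimate: it first proves a mesoscopic replacement \emph{in expectation along the trajectory} (Lemma \ref{s02}), using the time-integrated Dirichlet form bound $\int_0^t D^{\alpha(s)}_N(\sqrt{g_s})\,ds \le C_0/N$ extracted from entropy production with respect to a slowly varying product reference measure (Lemma \ref{s03}); the boundary terms are killed by the same Dirichlet-form argument. Only after this replacement does the entropy inequality enter, and only for the quadratic terms $[\eta^{K_N}(j)-\rho(s,j/N)]^2$, whose exponential moments at a small tuned parameter are uniformly bounded under the product measure.

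A second, related inaccuracy: you attribute the $O(\epsilon_N^2)$ size of the second-order Taylor remainder to the $C^{1+\beta/2,2+\beta}$ regularity of the profile. The remainder is $C_0[\eta^{K_N}(j)-\rho(t,j/N)]^2$, a fluctuation of the block empirical density, not a deterministic quantity; its typical size is $1/K_N + O(\epsilon_N^2)$, so one needs $K_N \gg \epsilon_N^{-2}$ (large-deviations step), while the replacement cost forces $M_N K_N \ll N$ with $M_N = \epsilon_N^{-2}$; the non-emptiness of this window is exactly the hypothesis $\epsilon_N^{-4}\ll N$. Your appeal to a log-Sobolev/Feynman--Kac eigenvalue bound could in principle substitute for the Dirichlet-form replacement, but as sketched it conflates the static moment bound with the dynamical one and ignores that the process is not reversible with respect to $\nu_t^N$ (boundary driving, weak asymmetry, time dependence), so the antisymmetric part and the boundary generators would have to be controlled separately --- none of which is addressed. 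As it stands, the central mechanism that makes the theorem work at the refined scale $N\epsilon_N^2$ is missing from your argument.
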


The proof of Theorem \ref{mt7} is divided in several steps. Fix a
density $\theta\in (0,1)$, and denote by $\nu_\theta=\nu^N_\theta$ the
product measure on $\Sigma_N$ with density $\theta$:
\begin{equation}
\label{03}
\nu_\theta (\eta) \;=\; \frac 1{Z_N(\theta)} \,\exp\Big\{f'(\theta)
\sum_{j=1}^{N-1} \eta_j \Big\} \;,
\end{equation}
where $Z_N(\theta)$ is the partition function which turns
$\nu_\theta$ into a probability measure, and
\begin{equation}
\label{08}
\beta \;:=\; f'(\theta)\; =\; \log \frac{\theta}{1-\theta}\;.
\end{equation}
We use the same notation $\nu_\theta$ to represent the the Bernoulli
product measure on $\{0,1\}^{\bb Z}$ with density $\theta$.

Let $L^2(\nu_\theta)$ be the space of functions $f:\Sigma_N \to\bb R$
endowed with the scalar product
\begin{equation*}
\<f\,,\,g\>_{\nu_\theta} \;=\; \int f(\eta)\, g(\eta) \, \nu_\theta (d\eta)\;.
\end{equation*}
Denote by $L^*_N=L^{\lambda,E,*}_N$ the adjoint in $L^2(\nu_\theta)$
of the generator $L_N$ introduced in \eqref{01}. A simple computation
shows that for all $f:\Sigma_N \to\bb R$,
\begin{equation}
\label{02}
(L^*_N f)(\eta) \;=\;   (L^*_{0,1}  f)(\eta) \;+\;
\sum_{j=1}^{N-2}  (L^*_{j,j+1}  f)(\eta)  \;+\;   (L^*_{N-1,N}  f)(\eta)\;,
\end{equation}
where, for $1\le j\le N-2$,
\begin{equation*}
\begin{split}
&  (L^*_{N-1,N} f)(\eta) \;=\; c^{N,\lambda,E}_{N-1,N} (\sigma^{N-1,N}\eta)\,
e^{\beta (1-2\eta_{N-1})} \, f(\sigma^{N-1,N}\eta) 
\;-\; c^{N,\lambda,E}_{N-1,N} (\eta)\, f(\eta) \;, \\
& \quad (L^*_{j,j+1}  f)(\eta)  \; =\; c^{N,\lambda,E}_{j,j+1} (\sigma^{j,j+1}\eta) \,
f(\sigma^{j,j+1}\eta) \; -\; c^{N,\lambda,E}_{j,j+1}(\eta) \, f(\eta)\;, \\
& \qquad (L^*_{0,1} f)(\eta) \;=\; c^{N,\lambda,E}_{0,1} (\sigma^{0,1} \eta)\,
e^{\beta (1-2\eta_1)} \, f(\sigma^{0,1}\eta)    
\;-\; c^{N,\lambda,E}_{0,1} (\eta) \, f(\eta) \;.
\end{split}
\end{equation*} 
In this formula, $\beta$ is the chemical potential associated to the
density $\theta$, which has been introduced in \eqref{08}.  It follows
from the previous formula that the adjoint of $L_N(t)$ in
$L^2(\nu_\theta)$, denoted by $L^*_N(t)$, is given by \eqref{02} with
$E$ and $\lambda$ replaced by $E(t)$ and $\lambda(t)$.  

\begin{proof}[Proof of Theorem \ref{mt7}]
Fix sequences $\ell_N$, $\epsilon_N$ satisfying the assumptions of the
theorem, and let $\gamma$ be a function in $C^2_0([0,1])$. Denote by
$\rho(t,x)=\rho_N(t,x)$ the solution of \eqref{34}. Consider a
sequence of probability measures $\{\mu_N: N\ge 1\}$, $\mu_N\in \ms
M_N$, satisfying \eqref{36}. Let $\alpha(t)=(\alpha_0(t)$,
$\alpha_1(t))$ be the density of particles associated to the chemical
potential $\lambda(t)$:
\begin{equation}
\label{37}
\alpha_0(t) \;=\; \frac{e^{\lambda_0(t)}}{1+e^{\lambda_0(t)}} \;,\quad
\alpha_1(t) \;=\; \frac{e^{\lambda_1(t)}}{1+e^{\lambda_1(t)}}\;\cdot
\end{equation}

Recall that $\{\mf S^N_t :t\ge 0\}$ represents the semigroup
associated to the generator $N^2 \ell_N L_N(t)$, and let
\begin{equation}
\label{38} 
f_t \;=\; \frac{d \mu_N \mf S^N_t}{d\nu_\theta}\;, \quad \psi_t \;=\;
\frac{d \nu^N_{\rho_N(t,\cdot)}}{d\nu_\theta}\;.
\end{equation}
A simple computation yields
\begin{equation}
\label{04}
\psi_t(\eta)\;=\; \frac {Z_N(\theta)}{Z_N(\rho(t))} \,
\exp\Big\{\sum_{j=1}^{N-1} \eta_j [f'(\rho(t,j/N)) - f'(\theta)] \Big\}
\;, 
\end{equation}
where $\rho(t,x)=\rho_N(t,x)$ is the solution of equation \eqref{34},
$Z_N(\theta)$, $f$ have been introduced in \eqref{03}, \eqref{53},
respectively, and $Z_N(\rho(t))$ is the normalizing constant given by
\begin{equation*}
Z_N(\rho(t)) \;=\; \exp\Big\{- \sum_{j=1}^{N-1} \log [1-\rho(t,j/N)] \Big\}
\;.
\end{equation*}
With this notation, in view of \eqref{33},
\begin{equation*}
H(\mu_N \mf S^N_t | \nu_{\rho_N(t,\cdot)}) \;=\; \int f_t \log \frac{f_t}{\psi_t}\,
d\nu_\theta\;.  
\end{equation*}
Moreover, an elementary computation shows that the density $f_t$
solves the Kolmogorov forward equation
\begin{equation}
\label{39}
\frac d{dt} f_t \;=\; N^2 \ell_N L^*_N (t) f_t\;.
\end{equation}
The proof of Theorem \ref{mt7} is divided in three steps.

\smallskip\noindent{\bf Step 1: Entropy production.}  A computation,
similar to the one presented in the proof of Lemma 1.4 in
\cite[Chapter 6]{kl}, yields that
\begin{equation}
\label{12}
\frac d{dt} H(\mu_N \mf S^N_t | \nu_{\rho_N(t,\cdot)}) \;\le\; \int 
\frac{ N^2 \ell_N L^*_N (t) \psi_t  - \partial_t \psi_t}{\psi_t} \,  
f_t  \, d\nu_\theta\;.
\end{equation}

Let $h$ and $g:\{0,1\}^{\bb Z}\to \bb R$ be the cylinder functions
given by
\begin{equation}
\label{07}
h(\xi) \;=\; \sum_{a=1}^m m_a\; h_a(\xi)\;, \quad
g(\xi) \;=\; \frac 12\, [\xi_1-\xi_0]^2\, c(\xi)\;,
\end{equation}
where $m_a = \sum_k k \mu_a(k)$. Recall the definition of the
operators $\tau^{N,\lambda}_j$, $N\ge 1$, $1\le j<N-1$, introduced
just above \eqref{35}, and let $\tau^{N}_j (t) =
\tau^{N,\lambda(t)}_j$.  A long, but straightforward, computation
which uses the identity \eqref{35}, yields that
\begin{equation*}
\begin{split}
\frac{ N^2 \ell_N L^*_N (t) \psi_t  - \partial_t \psi_t}{\psi_t}  
\;=\; \ell_N \{ I_1 \;+\; I_2 \;+\; I_3 \} \;+\; O_N(\ell_N) \;, 
\end{split}
\end{equation*}
where $O_N(\ell_N)$ represents an error absolutely bounded by
$C_0\ell_N$, $C_0$ being a finite constant independent of $N$, and
where 
\begin{equation*}
\begin{split}
I_1\; &=\; \sum_{j=1}^{N-2} G_1(t,j/N)\, 
(\tau^N_j (t) \, h)(\eta)
\; +\; \sum_{j=1}^{N-2} G_2(t,j/N)  \, (\tau^N_j (t) \, g)(\eta)  \\
&\qquad -\; \sum_{j=1}^{N-1} \frac{(\partial_t \rho)}{\chi(\rho)}
(t,j/N) \, [\eta(j) - \rho(t,j/N)] \;,
\end{split}
\end{equation*}
\begin{equation*}
\begin{split}
I_2\; &=\; N \, H_-(t) \,
\sum_{a=1}^m \sum_{k\in\bb Z} \mu_a(k) \sum_{j=1-k}^{0}  (\tau^N_j
(t)\, h_a)(\eta)  \\
& \qquad -\; N \, H_+(t)\, 
\sum_{a=1}^m \sum_{k\in\bb Z} \mu_a(k) \sum_{j=N-1-k}^{N-2} (\tau^N_j
(t)\, h_a)(\eta)  \;,
\end{split}
\end{equation*}
\begin{equation*}
I_3\; =\; N \, H_+(t) \, 
(\tau^N_{N-1} (t) \, c) (\eta)\, [\eta_{N-1} - \alpha_1(t)]
\;-\; N \, H_-(t) \, 
(\tau^N_{0} (t) \, c)(\eta)\, [\eta_1 - \alpha_0(t)] \;.
\end{equation*}
In these formulas, 
\begin{align*}
& G_1(t,x) \;=\; \partial_x \big\{ 
\partial_x f'(\rho(t,x)) - E(t,x) \big\} \;, \\
&\quad G_2(t,x) = \partial_x f'(\rho(t,x)) 
\big\{ \partial_x f'(\rho(t,x)) -
E(t,x) \big\}\;, \\
&\qquad H_-(t) = \partial_x f'(\rho(t,0)) -
E (t,0)\;, \quad H_+(t) = \partial_x f'(\rho(t,1)) - E (t,1)\;,
\end{align*}
and $\alpha_0(t)$, $\alpha_1(t)$ are the densities at the boundary,
defined in \eqref{37}. Note that $G_1 = \partial_x F_N$, $G_2 = F_N^2
+ E F_N$, $H_-(t)= F_N(t,0)$ and $H_+(t)= F_N(t,1)$. In particular, by
\eqref{52}, there exists a finite constant $C_0$ such that for all
$N\ge 1$, $0\le t\le T$,
\begin{equation}
\label{54}
\Vert G_1 (t) \Vert_\infty \;\le\; \frac{C_0}{\ell_N} \;, \quad
\Vert G_2 (t) \Vert_\infty \;\le\; \frac{C_0}{\ell_N} \;, \quad
\Vert H_\pm (t) \Vert_\infty \;\le\; 
\frac{C_0}{\ell_N} \;.
\end{equation}

For a cylinder function $\Psi: \{0,1\}^{\bb Z}\to\bb R$, let
$\hat\Psi:[0,1]\to\bb R$ be the polynomial given by
\begin{equation}
\label{40}
\hat \Psi(\theta) \;=\; E_{\nu_\theta}[\Psi]\;,
\end{equation}
where, we recall, $\nu_\theta$ is the Bernoulli product measure with
density $\theta$. By \eqref{07}, \eqref{63} and \eqref{62},
\begin{equation}
\label{55}
\hat h' (\theta) \;=\; D(\theta) \;,\quad
\hat g(\theta)\;=\; \chi(\theta)\;.
\end{equation}

We claim that, in the first two line of $I_1$, the replacement of
the cylinder functions $\tau^N_j (t)h$, $\tau^N_j (t) g$ by $\tau^N_j
(t) h - \hat h(\rho(t,j/N))$, $\tau^N_j (t) g - \hat g(\rho(t,j/N))$,
respectively, produces an error absolutely bounded by a finite
constant independent of $N$. Similarly, the replace-cement in the two
line of $I_2$ of the cylinder functions $\tau^N_j (t) h_a$, $j\sim 0$,
$\tau^N_k (t) h_a$, $k\sim N$, by $\tau^N_j (t) h_a - \hat
h_a(\alpha_0(t))$, $\tau^N_k (t) h_a - \hat h_a(\alpha_1(t))$ produces
an error of the same order.

Indeed, denote by $J_1$ (resp. $J_2$) the first line of $I_1$
(resp. the two lines of $I_2$) with the cylinder functions $\tau^N_j
(t) h$, $\tau^N_j (t) g$ (resp. $\tau^N_j (t) h_a$, $j\sim 0$,
$\tau^N_k (t) h_a$, $k\sim N$) replaced by $\hat h(\rho(t,j/N))$,
$\hat g(\rho(t,j/N))$ (resp. $\hat h_a(\alpha_0(t))$, $\hat
h_a(\alpha_1(t))$). In the expression of $J_2$, observe that $\sum_k k
\mu_a(k) = m_a$. For any Lipschitz-continuous function $G:[0,1]\to\bb
R$, and for any non-negative integers $p$, $q$,
\begin{equation*}
\sum_{j=p}^{N-q} G(j/N) \; =\; N \int_0^1 G(x)\, dx \;+\; O_N(1)\;,
\end{equation*}
where $O_N(1)$ represents an error absolutely bounded by a finite
constant independent of $N$.  It follows from this estimate, from an
integration by parts, and from the identities \eqref{53}, \eqref{55}
that $J_1 + J_2$ is absolutely bounded by a finite constant
independent of $N$, proving the claim.

An elementary computation gives that
\begin{equation*}
\begin{split}
\frac{(\partial_t \rho)}{\chi(\rho)} \;=\;
\big\{ \partial^2_x f'(\rho) - \partial_xE\big \}\, 
D(\rho)
\;+\; \big\{ [\partial_x f'(\rho)]^2 - E\,
\partial_x f'(\rho) \big\} \, \chi'(\rho) 
\;.
\end{split}
\end{equation*}
In conclusion, in view of \eqref{55}, up to this point, we have shown
that
\begin{equation}
\label{13}
\frac{ N^2 \ell_N L^*_N (t) \psi_t  - \partial_t \psi_t}{\psi_t}  
\;=\; \ell_N \{ \hat I_1 \;+\; \hat I_2 \;+\; I_3 \} \;+\; 
O(\ell_N) \;, 
\end{equation}
where
\begin{equation*}
\hat I_1(t,\eta)\;=\; \sum_{j=1}^{N-2} G_1(t,j/N) \, 
V_N(h ; t, j,\eta)  
\; +\; \sum_{j=1}^{N-2} G_2(t,j/N) \, V_N(g ; t, j,\eta)\;,
\end{equation*}
\begin{equation*}
\begin{split}
\hat I_2(t, \eta)\; & =\; N \,H_-(t) \,
\sum_{a=1}^m \sum_{k\in\bb Z} \mu_a(k) \sum_{j=1-k}^{0}  
\big\{ (\tau^N_j(t) h_a)(\eta)  - \hat h_a (\alpha_0(t)) \big\} \\
& -\; N \, H_+(t) \,
\sum_{a=1}^m \sum_{k\in\bb Z} \mu_a(k) \sum_{j=N-1-k}^{N-2}
\big\{ (\tau^N_j(t) h_a)(\eta) - \hat h_a (\alpha_1(t)) \big\}  \;,
\end{split}
\end{equation*}
and, for a cylinder function $\varphi:\{0,1\}^{\bb Z} \to \bb R$,
\begin{equation*}
V_N(\varphi; t, j,\eta) \;=\; (\tau^N_j (t) \varphi)(\eta) 
- \hat \varphi(\rho(t,j/N)) - 
\hat \varphi' (\rho(t,j/N)) [\eta_j - \rho(t,j/N)] \;.
\end{equation*}

\smallskip\noindent{\bf Step 2: A mesoscopic entropy estimate.}
Denote by $D(\bb R_+, \Sigma_N)$ the space of right-continuous
trajectories $x:\bb R_+\to\Sigma_N$ with left limits. For each
probability measure $\mu$ in $\ms M_N$, denote by $\bb P^N_{\mu}$ the
probability measure on $D(\bb R_+, \Sigma_N)$ induced by the Markov
chain with generator $\ell_N N^2 L_N(t)$ starting from the
distribution $\mu$. Expectation with respect to $\bb P^N_{\mu}$ is
represented by $\bb E^N_{\mu}$.

Recall that $\epsilon^{-4}_N \ll N$.  Let $M_N = \epsilon^{-2}_N$, and
fix a sequence $\{K_N : N\ge 1\}$ such that $M_N \ll K_N$, $M_N \, K_N
\ll N$.  Let
\begin{equation*}
\tilde I_{1,N}(t,\eta) \; =\; \sum_{j=K_N+1}^{N-K_N-1} G_1(t,j/N) 
\, V_N(\hat h;t,j,\eta) \;+\; 
\sum_{j=1}^{N-1} G_2(t,j/N) \, V_N(\hat g;t,j,\eta)\;,
\end{equation*}
where, for a smooth function $\hat \varphi:[0,1]\to\bb R$,
\begin{equation*}
V_N(\hat \varphi ;t,j,\eta)\;=\; \hat \varphi(\eta^{K_N}(j)) 
- \hat \varphi(\rho(t,j/N)) - 
\hat \varphi'(\rho(t,j/N)) [\eta^{K_N}(j) - \rho(t,j/N)] \;.
\end{equation*}
Note that in the definition of $\hat I_1(t,\eta)$ the sum is carried
over $1\le j\le N-1$, while in the definition of $\tilde I_1(t,\eta)$
it is carried over $K_N+1\le j\le N-K_N-1$. In view of \eqref{54},
this produces an error of order $K_N/\ell_N$ in the difference between
$\hat I_1(t,\eta)$ and $\tilde I_1(t,\eta)$.

By \eqref{54} and Lemma \ref{s02}, since $M_N K_N \ll N$,
\begin{equation*}
\lim_{N\to\infty} \frac{M_N\ell_N}N \, \bb E_{\mu_N} \Big[\, 
\int_0^t \{ \hat I_{1,N}(s,\eta_s) - 
\tilde I_{1,N}(s,\eta_s)\} \, ds \Big] \;=\; 0\;,
\end{equation*}
and 
\begin{equation*}
\lim_{N\to\infty} \frac{M_N \ell_N}N \, 
\bb E_{\mu_N} \Big[\, \int_0^t \hat I_2(s,\eta_s) \, ds 
\Big] \;=\; 0\;, \quad
\lim_{N\to\infty} \frac{M_N \ell_N}N \, 
\bb E_{\mu_N} \Big[\, \int_0^t I_3(s,\eta_s) \, ds 
\Big] \;=\; 0\;.
\end{equation*}
On the other hand, by definition of $M_N$, by \eqref{56} and by the
assumption on $\epsilon_N$, $M_N \ell_N = \epsilon^{-2}_N \ell_N \le
C_0 \epsilon^{-3}_N \ll N$.  Therefore, in view of \eqref{12},
\eqref{13},
\begin{align*}
\frac 1{N\varepsilon^{2}_N} \, H(\mu_N \mf S^N_t |
\nu_{\rho_N(t,\cdot)})  \; & \le\;
\frac 1{N\varepsilon^{2}_N} \, H(\mu_N | \nu_{\rho_N(0,\cdot)}) \\
\;& +\; \frac{M_N\ell_N}N \, \bb E_{\mu_N} \Big[\, 
\int_0^t \tilde I_{1,N}(s,\eta_s) \, ds \Big]
\;+\; R_N\;,
\end{align*}
where $R_N$ vanishes as $N\to\infty$.

\smallskip\noindent{\bf Step 3: A large deviations estimate.}  A
Taylor expansion up to the second order shows that $V_N(\hat
\varphi;t,j,\eta)$ is absolutely bounded by $C_0 [\eta^{K_N}(j) -
\rho(t,j/N)]^2$. The second term on the right hand side of the
previous equation is thus bounded above by
\begin{equation*}
C_0  \, \bb E_{\mu_N} \Big[\, \int_0^t 
\frac{M_N\ell_N}N \sum_{j=K_N+1}^{N-K_N-1} J(s,j/N)  
\, [\eta^{K_N}_s(j) - \rho(s,j/N)]^2 \, ds \Big]\;,
\end{equation*}
where $J(t,x) = |G_1(t,x)| + |G_2(t,x)|$. Since $M_N =
\epsilon^{-2}_N$, by the entropy inequality, the previous expression
is less than or equal to
\begin{equation*}
\begin{split}
& \frac{C_0}{AN\varepsilon^{2}_N} \int_0^t H(\mu_N \mf S^N_s 
| \nu_{\rho_N(s,\cdot)}) \, ds \\
& \;+\;  \int_0^t \frac{\varepsilon^{-2}_N}{AN} \log
E_{\nu_{\rho_N(s,\cdot)}} \Big[\, \exp\Big\{
A \ell_N \sum_{j=K_N+1}^{N-K_N-1} J(s,j/N)  
\, [\eta^{K_N} (j) - \rho(s,j/N)]^2 \Big\} \Big]\, ds
\end{split}
\end{equation*}
for every $A>0$. By H\"older's inequality and since
$\nu_{\rho_N(s,\cdot)}$ is a product measure, the second term o the
last sum is less than or equal to
\begin{equation*}
\int_0^t \frac{\varepsilon^{-2}_N}{ANK_N} \sum_{j=K_N+1}^{N-K_N-1} \log
E_{\nu_{\rho_N(s,\cdot)}} \Big[\, \exp\Big\{ A \ell_N  J(s,j/N) \,
K_N \, [\eta^{K_N} (j) - \rho(s,j/N)]^2 \Big\} \Big]\, ds 
\end{equation*}
By \eqref{51} and \eqref{54}, $\ell_N J(s,j/N) \le C_0$ and $\delta
\le \rho_N(s,x)\le 1-\delta$ for some $\delta>0$. Therefore, since
$\nu_{\rho_N(s,\cdot)}$ is the product measure with density
$\rho_N(s,\cdot)$, there exists $A_0$ such that for
\begin{equation*}
E_{\nu_{\rho_N(s,\cdot)}} \Big[\, \exp\Big\{ A C_0
K_N \, [\eta^{K_N} (j) - \rho(s,j/N)]^2 \Big\} \Big]\;\le\;  C'_0 
\end{equation*}
for all $0<A\le A_0$. The previous integral is therefore less than or
equal to $C_0 \varepsilon^{-2}_N/ AK_N \ll 1$. This proves
that there exists a finite constant $C_0$ such that
\begin{align*}
\frac 1{N\varepsilon^{2}_N} \, H(\mu_N \mf S^N_t |
\nu_{\rho_N(t,\cdot)})  \; &\le\;
\frac 1{N\varepsilon^{2}_N} \, H(\mu_N | \nu_{\rho_N(0,\cdot)}) \\
\;& +\;
C_0 \int_0^t \frac{1}{N\varepsilon^{2}_N} H(\mu_N \mf S^N_s |
\nu_{\rho_N(s,\cdot)}) \, ds \;+\; R_N\;,
\end{align*}
where $R_N$ vanishes as $N\to\infty$.  To conclude the proof of
Theorem \ref{mt7} it remains to apply Gronwall inequality.
\end{proof}

\begin{proof}[Proof of Theorem \ref{mt3}]
Set $\epsilon_N =\ell_N=1$. \eqref{51}. Conditions 
\eqref{52} and \eqref{56} are trivially satisfied. 
The assertion of Theorem \ref{mt3} follows therefore from Theorem
\ref{mt7}. 
\end{proof}

\begin{proof}[Proof of Theorem \ref{mt5}]
Assume that the external field vanishes: $E(t,x)=0$ and that the left
and right chemical potentials are equal, $\lambda_0(t) =
\lambda_1(t)$, $t\ge 0$. In This case the solution of the elliptic
equation \eqref{57} $\bar\rho_{\lambda, 0}$ is constant in space,
$\bar\rho_{\lambda, 0}(x) =\alpha$, where $\alpha = f'(\lambda)$.

Condition \eqref{51} for $N$ large enough follows from Proposition
\ref{s17}. Condition \eqref{52}, which can be read as conditions on
$\partial_x \rho_N$ and $\partial^2_x \rho_N$, follows from
Propositions \ref{s17} and \ref{s19}.
\end{proof}

\begin{proof}[Proofs of Corollary \ref{mt4} and \ref{mt6}]
The proofs are analogous to the one of Corollary 1.3, Chapter 6 in
\cite{kl}, provided we replace in the statement of Corollary \ref{mt6}
$\nu_{v_N(t,x)}$ by $\nu_{\rho_N(t,x)}$. However, since $\Psi$ is a
cylinder function,
\begin{equation*}
\frac 1{ \epsilon_N} \Big| \int_0^1 H(x) \big\{ 
\, E_{\nu_{v_N(t,x)}} [\Psi] - E_{\nu_{\rho_N(t,x)}} [\Psi]\big\} \, dx
\Big|\;\le\; \frac{C_0}{\epsilon_N} 
\int_0^1 \big| v_N(t,x) - \rho_N(t,x) \big| \, dx \;. 
\end{equation*}
By definition of $v_N$, the right hand side is equal to
\begin{equation*}
C_0 \int_0^1 \big| u_N(t,x) - v(t,x) \big| \, dx\;,
\end{equation*}
where $u_N(t)= \epsilon^{-1}_N [\rho_N(t) - \alpha(t)]$. It remains to
recall the statement of Proposition \ref{s16}.
\end{proof}

\section{Entropy estimates}

We adopt in this section the notation and the set-up introduced in the
previous one.  Recall from \eqref{03} that $\theta\in (0,1)$ is a
fixed parameter and that $\nu_\theta$ is the product measure on
$\Sigma_N$ with density $\theta$.  It is not difficult to show that
there exists a finite constant $C_0 = C_0(\theta)$ such that 
\begin{equation*}
\sup_\mu H(\mu | \nu_\theta) \;\le\; C_0 N\;,
\end{equation*}
where the supremum is carried over all probability measures $\mu$ on
$\Sigma_N$.

Fix a smooth function $\widehat\lambda : \bb R_+ \times [0,1]\to \bb
R$ such that $\widehat\lambda (t,a) = \lambda_a(t)$, $t\ge 0$, $a=0$,
$1$. Let $\alpha(t,x) =
e^{\widehat\lambda(t,x)}/[1+e^{\widehat\lambda(t,x)}]$, and denote by
$\nu^N_{\alpha(t)}$, $t\ge 0$, the product measure on $\Sigma_N$
associated to the density $\alpha(t,x)$:
\begin{equation*}
\nu^N_{\alpha(t)} (\eta) \;=\; \frac 1{\hat Z_N(t)} \,
\exp\Big\{\sum_{j=1}^{N-1} \eta_j \, f'(\alpha(t,j/N))  \Big\}
\;, 
\end{equation*}
where $\hat Z_N(t)$ is the normalizing constant given by
\begin{equation*}
\hat Z_N(t) \;=\; \exp\Big\{- \sum_{j=1}^{N-1} \log [1-\alpha(t,j/N)] \Big\}
\;.
\end{equation*}
Note that $\alpha(t,x)$ takes values in $(0,1)$. In particular, the
quantities introduced above are well defined.

Recall that $\{\mf S^N_t :t\ge 0\}$ represents the semigroup
associated to the generator $N^2 \ell_N L_N(t)$, $(d/dt) \mf S^N_t =
L_N(t) \mf S^N_t$, and that $H_N(\mu | \pi)$ stands for the relative
entropy of $\mu$ with respect to $\pi$. Denote by $D^{\alpha(t)}_N(\cdot)$,
$t\ge 0$, the functional which acts on functions $h: \Sigma_N \to \bb
R$, as
\begin{equation*}
\begin{split}
D^{\alpha(t)}_N(h) \; & =\; \sum_{j=1}^{N-2} \int
c^{N,\lambda(t)}_{j,j+1}  
(\eta) \, [h(\sigma^{j,j+1}\eta) 
- h(\eta)]^2\, d \nu^N_{\alpha(t)} (\eta) \\
 &+\; \int c^{N,\lambda(t)}_{0,1}(\eta) \, r^{\lambda(t)}_{0,1}(\eta)\,
[h(\sigma^{0,1}\eta) - h(\eta)]^2\, d \nu^N_{\alpha(t)} (\eta)\\
& +\;  \int c^{N,\lambda(t)}_{N-1,N}(\eta) \, r^{\lambda(t)}_{N-1,N}(\eta)\,
[h(\sigma^{N-1,N}\eta) - h(\eta)]^2\, d \nu^N_{\alpha(t)} (\eta)\;,
\end{split} 
\end{equation*}

\begin{lemma}
\label{s03}
Fix a sequence $\{\mu_N : N\ge 1\}$ of probability measures, $\mu_N
\in \ms M_N$.  For every $T>0$, there exists a finite constant $C_0$,
depending only on $E(t)$, $\alpha(t)$, $0\le t\le T$, such that for
all $0\le t\le T$,
\begin{equation*}
H_N(\mu_N \mf S^N_t | \nu^N_{\alpha(t)}) \;\le \; -\frac {N^2 \, \ell_N}2 \int_0^t 
D^{\alpha(s)}_N (\sqrt{g_s}) \, ds \;+\; C_0 \, N \,\ell_N \;,
\end{equation*}
where $g_t= g^N_t = d\mu_N \mf S^N_t/ d \nu^N_{\alpha(t)}$.
\end{lemma}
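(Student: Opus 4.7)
The plan is to apply a Yau-type entropy dissipation inequality with respect to the moving reference $\pi_t := \nu^N_{\alpha(t)}$, to bound the resulting correction term crudely by $C_0 N \ell_N$, and to integrate in time. The key simplification compared with Step~1 of the proof of Theorem~\ref{mt7} is that $\alpha(t,\cdot)$ is here an arbitrary smooth interpolation of the boundary data and need not satisfy any PDE, so the quantitative cancellations used there are neither available nor needed.

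The first step is the entropy dissipation inequality. Setting $g_t = d\mu_N \mf S^N_t/d\pi_t$ and $\psi_t = d\pi_t/d\nu_\theta$, a standard computation combining the Kolmogorov forward equation with the bondwise bound $x\log(y/x)\le -(\sqrt{x}-\sqrt{y})^2 + (y-x)$ gives
\begin{equation*}
\frac{d}{dt} H_N(\mu_N \mf S^N_t \,|\, \pi_t) \;\le\; -\,N^2\ell_N\, \mc D_t(\sqrt{g_t}) \;+\; \int \frac{N^2\ell_N\, L^*_N(t)\psi_t - \partial_t\psi_t}{\psi_t}\, g_t\, d\pi_t,
\end{equation*}
where $L^*_N(t)$ is the adjoint of $L_N(t)$ in $L^2(\nu_\theta)$ and $\mc D_t(\sqrt{g})$ is the Dirichlet form of $L_N(t)$ against $\pi_t$, built from the full rates $c^{N,\lambda(t),E(t)}_{j,j+1}$; the identity $\psi_t\, (L_N(t))^*_{\pi_t}\mb 1 = L^*_N(t)\psi_t$ converts the natural $L^2(\pi_t)$-adjoint into the $L^2(\nu_\theta)$-adjoint notation used in Theorem~\ref{mt7}.

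For the correction term I would invoke the algebraic identity derived in Step~1 of the proof of Theorem~\ref{mt7}---which is purely computational and never uses the PDE~\eqref{34}---verbatim, with the smooth extension $\alpha(t,x)$ playing the role of $\rho_N(t,x)$. This expresses the integrand as $\ell_N\{I_1+I_2+I_3\} + O_N(\ell_N)$. Because $\alpha$ is not assumed to solve any equation, the coefficients $G_1$, $G_2$, $H_\pm$ and $\partial_t\alpha/\chi(\alpha)$ appearing in $I_1$, $I_2$, $I_3$ are merely $O(1)$, rather than the $O(1/\ell_N)$ they would be under assumption~\eqref{52}; since $h_a$, $h$, $g$ and $c$ are bounded cylinder functions, each $I_k$ is then at most $C_0 N$ in absolute value, the prefactor $N$ in $I_2$ and $I_3$ multiplying a sum over only finitely many sites. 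Integrating against the probability density $g_t$, the correction term is therefore bounded above by $C_0 N\ell_N$ uniformly in $t\in[0,T]$.

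To conclude, I would observe that $c^{N,\lambda(t),E(t)}_{j,j+1}$ differs from the corresponding summand of $D^{\alpha(t)}_N$ by a factor $e^{\pm E(t,j/N)/(2N)} = 1+O(1/N)$ in both the bulk and the boundary terms, so $\mc D_t(\sqrt{g_t}) \ge \tfrac12\, D^{\alpha(t)}_N(\sqrt{g_t})$ for $N$ large. Integrating the differential inequality from $0$ to $t$ and absorbing the trivial initial bound $H_N(\mu_N|\nu^N_{\alpha(0)}) \le C_0 N$ into $C_0 N\ell_N$ (using $\ell_N\ge 1$) then gives the assertion. The principal anticipated obstacle is bookkeeping: faithfully reproducing the decomposition of Step~1 of Theorem~\ref{mt7} for an arbitrary smooth $\alpha$ requires careful handling of the bulk, the two boundary contributions and $\partial_t\psi_t$ simultaneously, but no subtle cancellation is needed along the way.
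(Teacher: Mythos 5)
Your argument is correct and yields the stated bound; it runs on the same relative-entropy mechanism as the paper's proof, but organizes the estimate differently. The paper bounds the term coming from the time dependence of the reference, $\int f_t\,\partial_t\log\phi_t\,d\nu_\theta$ with $\phi_t=d\nu^N_{\alpha(t)}/d\nu_\theta$ (your $\psi_t$), directly by $C_0N$, and then extracts the Dirichlet form from $2N^2\ell_N\int h_t\,L_N(t)h_t\,d\nu^N_{\alpha(t)}$ by a bond-by-bond change of variables, using that the Radon--Nikodym factors of $\nu^N_{\alpha(t)}$ under exchanges and boundary flips are $1+O(1/N)$ and absorbing the cross terms with Young's inequality; this produces the factor $1/2$ in front of $D^{\alpha(t)}_N$ for every $N$ and an error $N^2\ell_N\cdot O(1/N)=O(N\ell_N)$ per unit time. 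You instead retain the Dirichlet form at the level of the bondwise inequality $x\log(y/x)\le-(\sqrt x-\sqrt y)^2+(y-x)$ and push all of the non-invariance of $\nu^N_{\alpha(t)}$ into the adjoint correction $\int\{N^2\ell_N L^*_N(t)\psi_t-\partial_t\psi_t\}\psi_t^{-1}g_t\,d\nu^N_{\alpha(t)}$, which you bound crudely by $C_0N\ell_N$, and you recover $D^{\alpha(t)}_N$ from your Dirichlet form through the factors $e^{\pm E/2N}=1+O(1/N)$ (hence the factor $1/2$ only for $N$ large, which is all that is needed). Both routes are standard; the paper's version keeps the constants valid for every $N$ and avoids any appeal to the Step~1 algebra, while yours reuses that algebra and makes transparent that only crude bounds, not cancellations, are required.

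One imprecision is worth flagging. Your claim that the Step~1 identity in the proof of Theorem~\ref{mt7} is ``purely computational and never uses the PDE~\eqref{34}'' is not literally true as the paper writes it: the grouping of the $\partial_t\psi_t$ contribution as $-\ell_N\sum_j(\partial_t\rho/\chi(\rho))[\eta(j)-\rho(t,j/N)]$ inside $\ell_N I_1$, and the ``elementary computation'' displayed just before \eqref{13}, do invoke \eqref{34}. This does not damage your proof, because all you need is the crude estimate, which can be obtained without quoting Step~1 at all: since $\alpha(t,\cdot)$ is smooth and bounded away from $0$ and $1$, $\partial_t\psi_t/\psi_t=\sum_j[\eta(j)-\alpha(t,j/N)]\,\partial_t f'(\alpha(t,j/N))$ is bounded by $C_0N$ directly, and the summation-by-parts/Taylor computation of $N^2\ell_N L^*_N(t)\psi_t/\psi_t$ gives $O(N\ell_N)$, the boundary contributions being $O(N\ell_N)$ rather than $O(N^2\ell_N)$ precisely because $\alpha(t,0)$ and $\alpha(t,1)$ coincide with the reservoir densities $\alpha_0(t)$, $\alpha_1(t)$ --- a point your appeal to $I_2$, $I_3$ uses implicitly and which deserves to be stated, since it is the only place where the choice of the interpolation $\alpha$ actually matters.
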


\begin{proof}
In this proof, $C_0$ represents a finite constant which may depend
only on $\theta$, $E(t)$, $\alpha(t)$, $0\le t\le T$, but not on $N$.

Fix a sequence $\{\mu_N : N\ge 1\}$ of probability measures, $\mu_N
\in \ms M_N$.  Recall the definition of $f_t=f^N_t$, introduced in
\eqref{38}, and let $\phi_t$, $t\ge 0$, be given by
\begin{equation*}
\phi_t \;=\; \frac{d \nu^N_{\alpha(t)}}{d\nu_\theta} \quad\text{so that}
\quad g_t \;=\; \frac {f_t}{\phi_t}\;.
\end{equation*}
By definition,
\begin{equation*}
H_N(t) \;:=\; H(\mu_N \mf S^N_t| \nu^N_{\alpha(t)})  \;=\;
\int f_t \log \frac{f_t}{\phi_t} \, d\nu_\theta\;, 
\end{equation*}
so that
\begin{equation*}
\frac d{dt} H_N(t) \:=\; 
N^2 \, \ell_N \int f_t \, L_N(t) \log \frac{f_t}{\phi_t} \, d\nu_\theta\;
\;-\; \int f_t \, \partial_t \log \phi_t \, d\nu_\theta\;.
\end{equation*}
The second term on the right hand side is clearly bounded by $C_0
N$. On the other hand, since $a \log b/a \le 2 \sqrt{a} (\sqrt{b} -
\sqrt{a})$, for all $a$, $b>0$, the first term on the right hand side
is less than or equal to
\begin{equation*}
2 N^2 \, \ell_N \int h_t \, L_N(t) \, h_t \, d \nu^N_{\alpha(t)} \;,
\end{equation*}
where $h_t =  \sqrt{g_t} = \sqrt{f_t/\phi_t}$.

Recall the definition of the generator $L_N(t)$ introduced in
\eqref{01}. Denote by $L^o_N(t)$ the piece of $L_N(t)$ which
corresponds to the sum for $j$ in the range $1\le j\le N-2$, and
denote by $L^b_N(t)$ the remaining two terms. A change of variables
$\eta' = \sigma^{j,j+1}\eta$, $1\le j\le N-2$, yields
\begin{equation*}
2 \int h_t \, L^o_N(t) \, h_t \, d \nu^N_{\alpha(t)} \;=\;
-\, \sum_{j=1}^{N-2} \int c^{N,\lambda(t)}_{j,j+1}(\eta) \, [h_t (\sigma^{j,j+1}\eta) 
- h_t (\eta)]^2\, d \nu^N_{\alpha(t)} (\eta)\; +\; R_N\;,
\end{equation*}
where $R_N$ is a remainder absolutely bounded by
\begin{equation*}
\frac {C_0}{N} \sum_{j=1}^{N-2} \int c^{N,\lambda(t)}_{j,j+1} (\eta) \, [h_t(\sigma^{j,j+1}\eta) 
+ h_t(\eta)]\, |h_t(\sigma^{j,j+1}\eta) - h_t(\eta)| \, d \nu^N_{\alpha(t)} (\eta)\;
\end{equation*}
for some finite constant $C_0$. By Young's inequality, and since
$g_t=h^2_t$ is a density with respect to $\nu^N_{\alpha(t)}$, the
previous expression is bounded by the sum of a term which can be
absorbed by the first term on the right hand side of the penultimate
displayed equation with a term bounded by $C_0/N$, that is,
\begin{equation*}
2 \int h_t \, L^o_N(t) \, h_t \, d \nu^N_{\alpha(t)} \;\le \;
-\, \frac 12\, \sum_{j=1}^{N-2} \int c^{N,\lambda(t)}_{j,j+1} (\eta) \, 
[h_t(\sigma^{j,j+1}\eta) - h_t(\eta)]^2\, d \nu^N_{\alpha(t)} (\eta)\; +\;
\frac{C_0}{N}\;\cdot 
\end{equation*} 

Since $\widetilde \lambda(t)$ is equal to $\lambda(t)$ at the boundary
of the interval $[0,1]$ 
\begin{equation*}
\frac{\nu^N_{\alpha(t)} (\sigma^{0,1}\eta)} {\nu^N_{\alpha(t)}
  (\eta)}\, \frac{r^{\lambda(t)}_{0,1}(\sigma^{0,1}\eta)}{
  r^{\lambda(t)}_{0,1} (\eta)} \; =\; 1 \;+\;  R_N\;,
\end{equation*}
where $R_N$ is absolutely bounded by $C_0/N$.  In view of this
identity, and with a similar computation to the one presented for the
interior piece of the generator, we conclude that
\begin{equation*}
\begin{split}
& 2 \int h_t \, L^b_N(t) \, h_t \, d \nu^N_{\alpha(t)} \; \le \;
-\, \frac 12\, \int c^{N,\lambda(t)}_{0,1}(\eta) \, r^{\lambda(t)}_{0,1}(\eta)\,
[h_t(\sigma^{0,1}\eta) - h_t(\eta)]^2\, d \nu^N_{\alpha(t)} (\eta)\\
&\qquad\quad -\; \frac 12\, \int 
c^{N,\lambda(t)}_{N-1,N}(\eta)\,  r^{\lambda(t)}_{N-1,N}(\eta)\,
[h_t(\sigma^{N-1,N}\eta) - h_t(\eta)]^2\, d \nu^N_{\alpha(t)} (\eta)\; +\;
\frac{C_0}{N^2}\;.
\end{split}
\end{equation*}

It follows from the previous estimates that
\begin{equation*}
H_N(t) \;-\; H_N(0) \;\le \; -\frac {N^2 \, \ell_N}2 \int_0^t 
D_N (s,\sqrt{g_s}) \, ds \;+\; C_0 \, N \,\ell_N \;, 
\end{equation*}
which concludes the proof of the lemma since $H_N(0) \le C_0N$, as
observed at the beginning of this section.
\end{proof}

For a positive integer $k$, denote by $\eta^k(j)$ the density of
particles in an interval of length $2k+1$ centered at $j$:
\begin{equation*}
\eta^k(j)\;=\; \frac 1{2k+1} \sum_{i\in I_k(j)\cap \Lambda_N} \eta(i)\;,
\end{equation*}
where $I_k(j) = \{j-k, \dots, j+k\}$. 

Recall the definition of the polynomial $\hat h: [0,1] \to \bb R$
given in \eqref{40}, where $h$ is a cylinder function, and recall the
definition of the probability measures $\bb P^N_\mu$ introduced at the
beginning of Step 2 in the previous section.

\begin{lemma}
\label{s02}
Let $G_N:\bb R_+ \times [0,1]\to\bb R$ (resp. $H_N:\bb R_+ \to\bb R$),
$N\ge 1$, be a sequence of functions in $C^{0,1}(\bb R_+ \times
[0,1])$ (resp. $C(\bb R_+)$) such that for all $T>0$,
\begin{gather*}
\sup_{N\ge 1} \sup_{0\le t\le T} \Vert G_N (t) \Vert_\infty \;<\;
\infty\;, \quad 
\sup_{N\ge 1} \sup_{0\le t\le T} \Vert H_N (t) \Vert_\infty \;<\;
\infty\;.
\end{gather*}
Let $h:\{0,1\}^{\bb Z} \to \bb R$ be a cylinder function. Fix a
sequence $\{\mu_N : N\ge 1\}$ of probability measures, $\mu_N \in \ms
M_N$.  Consider two sequences $M_N\uparrow\infty$ and
$K_N\uparrow\infty$ such that $M_N \ll K_N$, $M_N K_N \ll N$. Then,
for every $T>0$,
\begin{equation*}
\lim_{N\to\infty} M_N \, \bb E_{\mu_N} \Big[\, \int_0^T
\frac{1}N \sum_{j=K_N+1}^{N-1-K_N} G_N(s,j/N) \big\{ (\tau_j h)(\eta_s) - 
\hat h(\eta^{K_N}_s(j)) \big\} \, ds \Big] \;=\; 0\;,
\end{equation*}
and
\begin{equation*}
\begin{split}
& \lim_{N\to\infty} M_N \bb E_{\mu_N} \Big[ \int_0^T
H_N(s) \big\{ \tau^{N,\lambda(s)}_0 h(\eta_s) - 
\hat h(\alpha(s,0)) \big\} \, ds \Big] \;=\;
0\;, \\
& \quad \lim_{N\to\infty} M_N \bb E_{\mu_N} \Big[ \int_0^T
H_N(s) \big\{ \tau^{N,\lambda(s)}_N h(\eta_s) - 
\hat h(\alpha(s,1)) \big\} \, ds \Big] \;=\;
0\;.   
\end{split}
\end{equation*}
\end{lemma}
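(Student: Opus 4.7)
The plan is to prove the bulk (first) limit in detail; the two boundary limits follow from the same ideas specialised to a bounded neighbourhood of $0$ (resp.\ $N$), where the sum in question involves only a bounded number of sites and the spectral gap of the boundary-reservoir dynamics is of order one.

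Set $V_j(s,\eta) = (\tau_j h)(\eta) - \hat h(\eta^{K_N}(j))$ and
\begin{equation*}
W_N(s,\eta) \;=\; \frac{1}{N}\sum_{j=K_N+1}^{N-K_N-1} G_N(s,j/N)\, V_j(s,\eta)\;,
\end{equation*}
so that the target is $M_N \bb E_{\mu_N}[\int_0^T W_N(s,\eta_s)\,ds] \to 0$. I would decompose $V_j = E_j[V_j] + (V_j - E_j[V_j])$, where $E_j$ denotes conditional expectation given the particle number in the block $B_j = I_{K_N+R}(j)$ (with $R$ the range of $h$) and the configuration outside it. The equivalence of ensembles for Bernoulli product measures gives $|E_j[V_j]| \le C/K_N$ pointwise; after summing over $j$, multiplying by $M_N$, and integrating in time, this bias contribution is bounded by $C M_N T/K_N$, which vanishes by the hypothesis $M_N \ll K_N$.

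For the fluctuation part $\tilde V_j := V_j - E_j[V_j]$, whose canonical conditional expectation on each $B_j$ vanishes, the plan is to combine the entropy inequality with the Dirichlet form bound provided by Lemma \ref{s03}. Using the Rayleigh ($H^{-1}$) estimate, for any $\lambda > 0$ and any density $g$ with respect to $\nu^N_{\alpha(s)}$,
\begin{equation*}
\int \tilde W_N(s,\cdot)\, g\, d\nu^N_{\alpha(s)} \;\le\; \lambda\, N^2 \ell_N\, D^{\alpha(s)}_N(\sqrt{g}) \;+\; \frac{1}{4\lambda N^2 \ell_N}\, \big\|\tilde W_N(s,\cdot)\big\|^2_{-1}\;,
\end{equation*}
where $\|\cdot\|_{-1}$ is the $H^{-1}$ norm associated with the symmetric part of $-L_N(s)$ and $\tilde W_N$ is $W_N$ with $V_j$ replaced by $\tilde V_j$. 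The spectral gap of symmetric exclusion in a box of size $O(K_N)$ is bounded below by $c/K_N^2$; combined with a Cauchy--Schwarz argument across the $O(K_N)$ arithmetic progressions of well-separated indices, this yields an explicit bound on $\|\tilde W_N(s,\cdot)\|^2_{-1}$. Integrating in $s$, using $\int_0^T D^{\alpha(s)}_N(\sqrt{g_s})\, ds \le C/(N^2 \ell_N)$ from Lemma \ref{s03}, and optimising $\lambda$, the fluctuation contribution, after multiplication by $M_N$, turns out to be of order $M_N K_N / N$, which vanishes by the hypothesis $M_N K_N \ll N$.

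The principal obstacle is the quantitative sharpness of the one-block estimate: the qualitative $o_{K,N}(1)$ statement of \cite[Chapter~5]{kl} would be overwhelmed by the factor $M_N = \epsilon_N^{-2}$. One must use the sharp $O(1/K_N)$ rate from the equivalence of ensembles for polynomials of the local density, together with the sharp $O(K_N^{-2})$ spectral gap for exclusion in a box of size $K_N$, and the two hypotheses $M_N \ll K_N$ and $M_N K_N \ll N$ are exactly what is needed for the bias and fluctuation contributions, respectively, to vanish. For the two boundary statements, the sum is over a bounded number of sites, so the estimate reduces to the convergence of the finite-dimensional marginal near the boundary to the local equilibrium measure $\nu_{\alpha(s,0)}$ (resp.\ $\nu_{\alpha(s,1)}$); since the boundary reservoir dynamics has spectral gap of order one, Lemma \ref{s03} and the entropy inequality alone deliver the required $o(1/M_N)$ rate.
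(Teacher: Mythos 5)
Your architecture (split $V_j$ into a bias handled by equivalence of ensembles and a canonically centered fluctuation handled through the Dirichlet form) is the standard quantitative one-block scheme and is close in spirit to the paper's argument, but the fluctuation estimate has a genuine quantitative gap, at two points. First, Lemma \ref{s03} does not give $\int_0^T D^{\alpha(s)}_N(\sqrt{g_s})\,ds\le C/(N^2\ell_N)$: since the relative entropy is nonnegative and $H_N(0)\le C_0N$, rearranging the inequality of Lemma \ref{s03} only yields $\int_0^T D^{\alpha(s)}_N(\sqrt{g_s})\,ds\le C/N$, so your quoted bound is off by a factor $N\ell_N$, and your announced rate $M_NK_N/N$ rests on this misquotation. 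Second, with the correct bound $C/N$, the ingredients you actually invoke --- the crude per-block estimate $\Vert\tilde V_j\Vert^2_{-1}\le \mathrm{gap}(B_j)^{-1}\mathrm{Var}(\tilde V_j)\lesssim K_N^2$, Cauchy--Schwarz over the $O(K_N)$ classes of disjoint blocks, and optimization over $\lambda$ --- give a fluctuation contribution of order $K_N^{3/2}/N$, hence $M_NK_N^{3/2}/N$ after multiplying by $M_N$. This does \emph{not} vanish for all sequences allowed in the statement (take $M_N=N^{2/5}$, $K_N=N^{1/2}$, which satisfy $M_N\ll K_N$ and $M_NK_N\ll N$ while $M_NK_N^{3/2}/N=N^{3/20}\to\infty$); even in the application one would be forced to strengthen the hypothesis $\epsilon_N^{-4}\ll N$. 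To reach the rate $K_N/N$, to which the two hypotheses are exactly calibrated, you need the sharper per-block $H_{-1}$ bound of order $K_N$, not $K_N^2$: up to an $O(1/K_N)$ diagonal error, $\tau_jh-\hat h(\eta^{K_N}_s(j))$ is an average of exchange terms of the form $\eta(x)[\eta(y)-\eta(z)]$ with $|y-z|\le 2K_N$, and the moving-particle (path) estimate gives $\Vert\eta(y)-\eta(z)\Vert^2_{-1}\le C|y-z|$. This is precisely what the paper carries out by hand (change of variables $\sigma^{j+k+1,j+\ell}$, Young's inequality, path lemma), and doing it by hand also controls the Radon--Nikodym/Jacobian corrections, bounded by $C K_N/N$, coming from the fact that $\nu^N_{\alpha(s)}$ is an inhomogeneous product measure and not reversible for $L_N(s)$ --- a point your abstract $H^{-1}$ inequality silently assumes away.

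For the boundary limits your idea is right but only asserted: the paper performs the change of variables $\sigma^{0,1}$, bounds the resulting term by $C_0\{D^{\alpha(s)}_N(\sqrt{g_s})\}^{1/2}$, and after Cauchy--Schwarz in time and Lemma \ref{s03} obtains the rate $N^{-1/2}$, which suffices because $M_N^2\le M_NK_N\ll N$. You should supply this computation (or an equivalent one) rather than appeal to an order-one spectral gap delivering an unspecified $o(1/M_N)$ rate.
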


\begin{proof}
Fix $T>0$ and $0\le t\le T$.  Every cylinder function can be written
as a linear combination of the functions $\Psi_A = \prod_{j\in A}
\eta_j$, $A$ a finite subset of $\bb Z$. It is therefore enough to
prove the lemma for such functions. We present the details for
$h=\Psi_{\{0,1\}}$, it will be clear that the arguments apply to all
cases.

Fix a sequence of continuous function $G_N:\bb R_+ \times [0,1]\to\bb
R$ satisfying the assumptions of the lemma and note that $\hat
h(\theta) = \theta^2$ in the case where $h=\Psi_{\{0,1\}}$. It follows
from the assumptions of the lemma and from a summation by parts that
\begin{equation*}
\frac{1}N \sum_{j=K_N+1}^{N-1-K_N} G_N(s,j/N) \Big\{ \eta (j)\eta(j+1) - 
\frac 1{2K_N+1} \sum_{k=-K_N}^{K_N} \eta (j+k)\eta(j+k+1) \Big\}
\end{equation*}
in the time interval $[0,T]$ is absolutely bounded by a term of order
$K_N/N$. On the other hand, we may write the difference $(2K_N+1)^{-1}
\sum_{|k|\le K_N} \eta (j+k)\eta(j+k+1) - \hat h(\eta^{K_N}(j))$ as
\begin{equation*}
\frac 1{(2K_N+1)^2} \sum_{k,\ell} \eta (j+k) 
\, [\eta(j+k+1) - \eta(j+\ell)] \;+\; O\Big(\frac 1{K_N}\Big) \;,
\end{equation*}
where the sum is carried over all $k$, $\ell$ such that $|k|\le K_N$,
$|\ell|\le K_N$, $k\not = \ell$. The error term takes into account the
diagonal terms $k=\ell$. Denote by $V_{j,K_N}(\eta)$ the first term of
the previous formula.

In view of the former estimates, the first expectation appearing in
the statement of the lemma is equal to
\begin{equation*}
\bb E_{\mu_N} \Big[\, \int_0^T
\frac{1}N \sum_{j=K_N+1}^{N-1-K_N} G_N(s,j/N) \, V_{j,K_N}(\eta_s) \, ds
\Big]  \;+\; R_N \;,
\end{equation*}
where $R_N$ is a remainder absolutely bounded by $C_0 \{(K_N/N) +
(1/K_N)\}$. Here and below, $C_0$ is a finite constant which does not
depend on $N$, and which may change from line to line.
 
Recall the definition of the density $g_s$, introduced at the
beginning of the proof of Lemma \ref{s03}.  The first term of the
previous formula is equal to
\begin{equation}
\label{10}
\int_0^T ds\, \int \frac{1}N \sum_{j=K_N+1}^{N-1-K_N} G_N(s,j/N) \,
  V_{j,K_N}(\eta) \, g_s (\eta) \, \nu^N_{\alpha(s)}  (d\eta)\;. 
\end{equation}
Recall the definition of $V_{j,K_N}(\eta)$ and represent the previous
integral, denoted by $I$, as $(1/2) I + (1/2) I$. In one of the
halves, perform the change of variables $\eta' =
\sigma^{j+k+1,j+\ell}\eta$ to rewrite the previous expression as
\begin{equation*}
\begin{split}
\frac 12\, & \frac 1{(2K_N+1)^2} \sum_{k,\ell}  \int_0^T ds\, \int \frac{1}N 
\sum_{j=K_N+1}^{N-1-K_N} G_N(s,j/N) \, \eta (j+k) \; \times \\
& \times \, [\eta(j+k+1) - \eta(j+\ell)]
\{ g_s (\eta) - g_s (\sigma^{j+k+1,j+\ell}\eta) \}  \, \nu^N_{\alpha(s)}
(d\eta) \;+ \; R_N\;.
\end{split}
\end{equation*}
In this formula, $R_N$ is a remainder which appears from the change of
measures $\nu^N_{\alpha(s)} (\sigma^{j+k+1,j+\ell} \eta)/
\nu^N_{\alpha(s)} (\eta)$, and which is bounded by $C_0 K_N/N$.
Rewrite $g_s(\eta) - g_s(\eta')$ as $[\sqrt{g_s(\eta)} - \sqrt{g_s(\eta')}]\,
[\sqrt{g_s(\eta)} + \sqrt{g_s(\eta')}]$ and apply Young's inequality to
estimate the previous expression by
\begin{equation*}
\begin{split}
& \frac 1{4A} \, \frac 1{\widetilde K_N^2} \sum_{k,\ell} \int_0^T ds\, \int \frac{1}N 
\sum_{j} G_N(s,j/N)^2 \, \Big\{ \sqrt{g_s (\eta)} +
\sqrt{g_s (\sigma^{j+k,j+\ell}\eta)} \Big\}^2  \, \nu^N_{\alpha(s)}  (d\eta) \\
& +\; \frac A{4} \, \frac 1{\widetilde K_N^2} \sum_{k,\ell} \int_0^T ds\, \int \frac{1}N 
\sum_{j} \Big\{ \sqrt{g_s (\eta)} - 
\sqrt{g_s (\sigma^{j+k,j+\ell}\eta)} \Big\}^2  \, \nu^N_{\alpha(s)}  (d\eta) 
\end{split}
\end{equation*}
for every $A>0$. In this formula, $\widetilde K_N = 2K_N+1$.  Since
$g_s$ is a density with respect to $\nu^N_{\alpha(s)}$, the first term of the
previous expression is bounded by
\begin{equation*}
\frac {C_0}{A} \, \int_0^T ds\, \frac{1}N 
\sum_{j=K_N+1}^{N-1-K_N} G_N(s,j/N)^2 \;\le\; \frac{C_0}A\;\cdot
\end{equation*}
On the other hand, by the path lemma, explained in pages 94-95 of
\cite{kl} and in details below equation (3.7) in \cite{land92}, the
second term of the previous formula is bounded above by
\begin{equation*}
\frac{ C_0 A K^2_N} {N} \, \int_0^T ds\, D^{\alpha(s)}_N(\sqrt{g_s})\;.
\end{equation*}
Recall that in the path lemma, a change of variables $\eta' =
\sigma^{j,j+1} \sigma^{j+1,j+2} \cdots \sigma^{k-1,k}\eta$ is
performed. Usually, the Jacobian of this change of variables is equal
to $1$ because the reference measure is a homogeneous product
measure. In the present case, where the measure $\nu^N_{\alpha(s)} $
is a local equilibrium, the Jacobian is equal to $\exp\{h(\eta)\}$,
where $h$ is uniformly bounded by $K_N/N$.  By Lemma \ref{s03}, the
previous displayed equation is less than or equal to $C_0 A
(K_N/N)^2$. Optimizing over $A$, we conclude that \eqref{10} is
bounded by $C_0 K_N/N$.

To compete the proof of the first assertion of the lemma it remains to
recollect all the previous estimates and to recall the assumptions on
the sequences $M_N$ and $K_N$.

We turn to the second assertion. Here again, we present the proof for
the left boundary in the case where $h(\eta)=\eta_1\eta_2$. Note that,
by definition of $\tau^{N,\lambda}_0$, the case $h(\eta)=\eta_0\eta_1$
reduces to the case $h(\eta)=\eta_1$.

By definition of $g_s$, the expectation appearing in the statement of
the lemma is equal to
\begin{equation*}
\int_0^T ds \, H_N(s) \int \big\{\eta_1 \eta_2 - \alpha(s,0)^2 \big\} \,
g_s(\eta) \, \nu^N_{\alpha(s)} (d\eta) \;.
\end{equation*}
Fix $s$ and write the difference $E_{\nu^N_{\alpha(s)}} [\eta_1 \eta_2 g_s ] -
\alpha(s,0)^2$ as 
\begin{equation}
\label{11}
\begin{split}
& \Big\{ E_{\nu^N_{\alpha(s)}} \big[\eta_1 \eta_2 g_s \big] \;-\; 
E_{\nu^N_{\alpha(s)}} \big[ \eta_2 g_s \big] \alpha(s,0) \Big\} \\
&\qquad \quad \;+\;
\Big\{ E_{\nu^N_{\alpha(s)}} \big[ \eta_2 g_s \big] \alpha(s,0)
\;-\; E_{\nu^N_{\alpha(s)}} \big[ g_s \big] \alpha(s,0)^2 \Big\}\;.
\end{split}
\end{equation}
Since $1 = \eta_1 + (1-\eta_1)$, the first term inside braces can be
written as
\begin{equation*}
[1-\alpha(s,0)] \, E_{\nu^N_{\alpha(s)}} \big[\eta_1 \eta_2 g_s \big] \;-\; 
\alpha(s,0) \, E_{\nu^N_{\alpha(s)}} \big[ (1-\eta_1) \eta_2 g_s \big] \;.
\end{equation*}
Performing a change of variables $\eta'= \sigma^{0,1}\eta$ in the
first expectation, this difference becomes
\begin{equation*}
\alpha(s,1/N) \, \frac{(1-\alpha(s,0) )}{1-\alpha (s,1/N)} \, 
E_{\nu^N_{\alpha(s)}} \big[(1-\eta_1) \eta_2 g_s(\sigma^{0,1}\eta) \big] \;-\; 
\alpha(s,0) \, E_{\nu^N_{\alpha(s)}} \big[ (1-\eta_1) \eta_2 g_s \big] \;,
\end{equation*}
Since $|\alpha(s,1/N) - \alpha(s,0)|\le C_0/N$, the previous
expression is equal to
\begin{equation*}
\alpha(s,0) \, E_{\nu^N_{\alpha(s)}} \Big[(1-\eta_1) \eta_2 \big\{
g_s(\sigma^{0,1}\eta) \,-\,  g_s (\eta) \big\} \Big]\; +\; R_N\;,
\end{equation*}
where $R_N$ is a remainder bounded by $C_0/N$ in view of the
assumptions on the sequence $H_N$.  At this point, we may repeat the
arguments presented in the first part of the proof to bound the first
term by $C_0 \{D^{\alpha(s)}_N(\sqrt{g_s}) \}^{1/2}$, whose time
integral, in view of Lemma \ref{s03}, is bounded by $C_0 N^{-1/2}$.  A
similar argument permits to estimate the second term in
\eqref{11}. This completes the proof of the lemma.
\end{proof}

\section{The hydrodynamic equation}

We prove in this section Proposition \ref{s16} and some estimates,
stated below in Propositions \ref{s17} and \ref{s19}, on the solution
of equation \eqref{22}.  Recall the definition of the spaces
$C^k([0,1])$ and $C^k_0([0,1])$, $k\ge 1$, introduced just below
\eqref{53}.  Denote by $\Vert f\Vert_p$, $p\ge 1$, the $L^p$-norm of a
function $f:[0,1]\to\bb R$,
\begin{equation*}
\Vert f\Vert_p^p \;=\; \int_0^1 |f(x)|^p\, dx\;.
\end{equation*}

Fix $\nu>0$, a smooth function $\alpha : \bb R_+ \to (0,1)$, and an
initial condition $\rho_0$ in $C^4([0,1])$ such that
$\rho_0(0)=\rho_0(1) = \alpha(0)$. Denote by
$\rho(t,x)=\rho_{\nu}(t,x)$ the solution of the initial--boundary
value problem
\begin{equation}
\label{22}
\left\{
\begin{split}
& \partial_t \rho = \nu \, \partial_x ( D(\rho) 
\partial_x \rho) \;, \\
& \rho(t,0) = \rho(t,1) =  \alpha(t)\;, \\
& \rho(0,x) = \rho_0\;.
\end{split}
\right.
\end{equation}

\begin{proposition}
\label{s17}
For every $t_0\ge 0$, there exists $\nu_0<\infty$, such that for all
$\nu\ge \nu_0$, there exist positive constants $0<b<B<\infty$,
depending only on $D$, $\alpha (t)$, $0\le t\le t_0$, such that for
all $0\le t\le t_0$,
\begin{align*}
& \Vert \rho(t) - \alpha (t) \Vert^2_\infty \;\le\;
B \, e^{- b \nu \, t} \Vert \partial_ x\rho_0 \Vert^2_2 
\; +\; \frac{B}{\nu^2} \;, \\
&\quad \Vert (\partial_x \rho) (t) \Vert^2_\infty \;\le\;
B \, e^{-b \nu t} \Big\{ \Vert \partial^2_x\rho_0 \Vert^2_2  \;+\;
\Vert \partial_ x\rho_0 \Vert^4_4 \;+\; \frac 1\nu\,  \Vert 
\partial_ x\rho_0 \Vert^2_2 \Big\} \;+\; \frac{B}{\nu^2} \;.
\end{align*}
\end{proposition}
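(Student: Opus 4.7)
The plan is to work throughout with $u(t,x) = \rho(t,x) - \alpha(t)$, which satisfies the Dirichlet problem
\[
\partial_t u \;=\; \nu \, \partial_x\bigl(D(\rho)\, \partial_x u\bigr) \;-\; \alpha'(t)\,, \qquad u(t,0)=u(t,1)=0\,,
\]
with initial datum $\rho_0 - \alpha(0)$ and $\partial_x u = \partial_x\rho$. The parabolic maximum principle applied to this quasilinear divergence-form equation keeps $\rho(t,x)$ in a compact subinterval of $(0,1)$ for $t\le t_0$, so that $D(\rho)\ge d_0>0$ and $D,D',D''$ are uniformly bounded along the trajectory. Both inequalities will then follow from a hierarchy of energy estimates combined with the one-dimensional Sobolev inequality $\Vert u\Vert_\infty^2 \le 2\Vert u\Vert_2\Vert \partial_x u\Vert_2$ and its second-order analog $\Vert \partial_x u\Vert_\infty^2 \le C(\Vert \partial_x u\Vert_2^2 + \Vert \partial_x u\Vert_2\Vert \partial_x^2 u\Vert_2)$.

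For the first inequality I would first derive an $L^2$ estimate by testing with $u$: the dissipation $-2\nu\int D(\rho)(\partial_x u)^2\,dx$ combined with Poincar\'e gives $-c\nu\Vert u\Vert_2^2$ and Cauchy--Schwarz on the forcing produces $\Vert u(t)\Vert_2^2 \le C e^{-c\nu t}\Vert \partial_x\rho_0\Vert_2^2 + C/\nu^2$ via Gr\"onwall. I would then upgrade to an $H^1$ estimate by testing with $-\partial_x^2 u$; the left-hand side yields $(1/2)(d/dt)\Vert\partial_x u\Vert_2^2$ cleanly since $\partial_t u$ vanishes on the boundary. Expanding $\partial_x(D(\rho)\partial_x u) = D'(\rho)(\partial_x u)^2 + D(\rho)\partial_x^2 u$, the main dissipation $-\nu d_0\Vert\partial_x^2 u\Vert_2^2$ absorbs the cubic term via Young and the 1D Gagliardo--Nirenberg inequality $\Vert\partial_x u\Vert_4^4 \le C(\Vert\partial_x u\Vert_2^4 + \Vert\partial_x u\Vert_2^3\Vert\partial_x^2 u\Vert_2)$, while the boundary contribution $\alpha'(t)\bigl[\partial_x u(t,1)-\partial_x u(t,0)\bigr]$ is handled via $\Vert\partial_x u\Vert_\infty \le \Vert\partial_x u\Vert_2+\Vert\partial_x^2 u\Vert_2$ and Young. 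Using $\Vert\partial_x u\Vert_2 \le C\Vert\partial_x^2 u\Vert_2$ (from the Dirichlet Poincar\'e applied to $u$), a bootstrap valid for $\nu\ge\nu_0$ large enough yields $\Vert\partial_x u(t)\Vert_2^2 \le C e^{-c\nu t}\Vert\partial_x\rho_0\Vert_2^2 + C/\nu^2$, which gives the first inequality via $\Vert u\Vert_\infty^2 \le \Vert\partial_x u\Vert_2^2$.

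For the second inequality I would push to $H^2$. Differentiating the PDE in $x$ gives $\partial_t w = \nu\partial_x^2(D(\rho)w)$ for $w=\partial_x\rho$, together with the non-homogeneous boundary condition $\partial_x(D(\alpha(t))w)(t,a) = \alpha'(t)/\nu$ obtained by evaluating the original PDE at $x=a$. Testing with $w$ and integrating by parts yields a dissipation $-\nu d_0\Vert\partial_x w\Vert_2^2$, boundary contributions that include cubic $w^3$ terms arising from integrating $\int D'(\rho)w^2\partial_x w\,dx$ by parts, and an interior quartic $(\nu/3)\int D''(\rho)w^4\,dx$. The quartic is controlled by Gagliardo--Nirenberg and is precisely what generates the $\Vert\partial_x\rho_0\Vert_4^4$ contribution in the statement; coupling back to the $H^1$ bound of the previous step accounts for the $\nu^{-1}\Vert\partial_x\rho_0\Vert_2^2$ correction. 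After a second bootstrap, this yields an $H^2$-type decay estimate on $\Vert\partial_x w(t)\Vert_2^2$, and the Sobolev inequality $\Vert w\Vert_\infty^2 \le C(\Vert w\Vert_2^2 + \Vert w\Vert_2\Vert\partial_x w\Vert_2)$ then delivers the second inequality.

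The main obstacle is Step 3: because $w=\partial_x\rho$ does not satisfy Dirichlet boundary conditions, the $H^2$ energy identity produces genuine boundary contributions (including the cubic $w^3$ at $\{0,1\}$) that must be absorbed using a trace inequality, and the interior quartic term $\int D''(\rho)w^4\,dx$ must be handled by Gagliardo--Nirenberg in a way that preserves the exponential-decay structure rather than giving only a polynomial-in-$t$ bound. Orchestrating these estimates so that the resulting differential inequality has the form $(d/dt)\mathcal{E}(t) \le -c\nu\,\mathcal{E}(t) + C/\nu$ with $\mathcal{E}(0)$ controlled by exactly the combination $\Vert\partial_x^2\rho_0\Vert_2^2 + \Vert\partial_x\rho_0\Vert_4^4 + \nu^{-1}\Vert\partial_x\rho_0\Vert_2^2$ is the technical heart of the argument.
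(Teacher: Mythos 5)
Your Step 1 is exactly the paper's first estimate (Assertion \ref{s04}), but the way you propose to close Steps 2 and 3 has a genuine gap. Testing with $-\partial_x^2 u$ produces the term $-\nu\int_0^1 D'(\rho)(\partial_x u)^2\,\partial^2_x u\,dx$, and after Young/Gagliardo--Nirenberg (or after writing $(\partial_x u)^2\partial^2_x u=\tfrac13\partial_x[(\partial_x u)^3]$) you are left with a remainder of the form $C\nu\,\Vert\partial_x u\Vert_2^4$ or $C\nu\,\Vert\partial_x u\Vert_2^6$, carrying the \emph{same} factor $\nu$ as the dissipation $-c\nu\Vert\partial_x u\Vert_2^2$. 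Taking $\nu\ge\nu_0$ large therefore does not help: the bootstrap closes only if $\Vert\partial_x\rho_0\Vert_2$ is small, i.e.\ only with constants depending on the initial datum. But Proposition \ref{s17} is stated for an arbitrary $\rho_0\in C^4$ with $b$, $B$ (and, in the paper's proof, $\nu_0$) depending only on $D$ and $\alpha$; smallness of the datum is assumed only later, in Proposition \ref{s19}. The same objection applies with more force to your Step 3, where you yourself flag the $\nu$-weighted quartic $\int D''(\rho)w^4$ and the boundary cubic terms as unresolved: that is not a technicality to be orchestrated afterwards, it is the point where the plain $u=\rho-\alpha$ energy hierarchy fails to close.

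The paper's device that removes both obstructions is the change of variable $d(\rho)$, where $d'=D$ (so the equation reads $\partial_t\rho=\nu\,\partial^2_x d(\rho)$), together with the shifted quantity $w=\partial^2_x d(\rho)-\nu^{-1}\alpha'(t)$. Since $\rho(t,a)=\alpha(t)$ at $a=0,1$, differentiating the boundary condition in time gives $\alpha'(t)=\partial_t\rho(t,a)=\nu\,\partial^2_x d(\rho)(t,a)$, so $w$ satisfies \emph{homogeneous} Dirichlet conditions: Poincar\'e applies to it directly and all integrations by parts are boundary-term free. Moreover, in the energies $F_n(t)=\int[\partial_x d(\rho)]^{2n}dx$ and $\int w^{2n}dx$ (Assertions \ref{s05}, \ref{s15}, \ref{s06}, \ref{s07}), adding and subtracting $\alpha'(t)$ before integrating by parts makes every nonlinear remainder appear multiplied by $\alpha'(t)$ rather than by $\nu$; these are absorbed by the Poincar\'e-improved dissipation as soon as $c_1\nu>2K_1C_1\beta_1(t_0)$, a condition involving only $D$ and $\alpha$. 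The sup-norm bounds then follow from Schwarz (Lemmas \ref{s18} and \ref{s14}), and the $\Vert\partial_x\rho_0\Vert_4^4$ term in the statement enters through $F_2(0)$ in this hierarchy, not through a Gagliardo--Nirenberg interpolation. You do use the compatibility identity $\partial_x(D(\rho)\partial_x\rho)(t,a)=\alpha'(t)/\nu$, but only as an inhomogeneous boundary condition for $\partial_x\rho$; the missing idea is to use it to build the boundary-vanishing quantity $w$ in the $d(\rho)$ variable, which is what makes the argument close uniformly in the initial datum.
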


In this proposition, $e^{-b\nu t}$ corresponds to the speed of
convergence to equilibrium of the solution of \eqref{22} in the case
where the boundary condition $\alpha(t)$ does not change in time,
while $1/\nu^2$ stands for the relaxation time due to the evolution of
the boundary conditions.


\begin{proposition}
\label{s19}
Assume that $\rho_0 = \alpha(0) + \varepsilon v_0$, where $v_0$
belongs to $C^4_0([0,1])$.  For every $t_0\ge 0$, there exists
$\varepsilon_0 >0$ and $\nu_0<\infty$, depending on $D$, $v_0$,
$\alpha (t)$, $0\le t\le t_0$, such that for all
$\epsilon<\epsilon_0$, $\nu\ge \nu_0$, there exist positive constants
$B<\infty$, such that for all $0\le t\le t_0$,
\begin{equation*}
\Vert (\partial^2_x \rho) (t) \Vert^2_\infty \;\le\; B\, \big\{
\varepsilon^2 + \frac 1{\nu^4} \big\}\;.
\end{equation*}
\end{proposition}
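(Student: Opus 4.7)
My approach to proving Proposition~\ref{s19} is to derive a parabolic equation for $w:=\partial_x^2\rho$, combine energy estimates with the first-derivative bounds of Proposition~\ref{s17}, and then pass from $L^2$ to $L^\infty$ via an Agmon-type inequality. Differentiating \eqref{22} twice in $x$ yields
\begin{equation*}
\partial_t w \;=\; \nu D(\rho)\,\partial_x^2 w \,+\, 4\nu D'(\rho)\,\partial_x\rho\,\partial_x w \,+\, \nu\bigl\{3D'(\rho)w^2 + 6D''(\rho)(\partial_x\rho)^2 w + D'''(\rho)(\partial_x\rho)^4\bigr\}\;,
\end{equation*}
while differentiating the boundary condition $\rho(t,a)=\alpha(t)$ in $t$ and substituting \eqref{22} at $x=a\in\{0,1\}$ gives
\begin{equation*}
w(t,a) \;=\; \frac{\alpha'(t)}{\nu D(\alpha(t))} \,-\, \frac{D'(\alpha(t))}{D(\alpha(t))}\bigl(\partial_x\rho(t,a)\bigr)^2\;,
\end{equation*}
which by Proposition~\ref{s17} is bounded by $C(\nu^{-1}+\varepsilon^2)$ uniformly in $t\in[0,t_0]$.

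Next, I would reduce to homogeneous Dirichlet data by writing $w=\Phi+\tilde w$ with $\Phi(t,x):=(1-x)w(t,0)+xw(t,1)$, so that $\tilde w$ vanishes at the endpoints. The new unknown $\tilde w$ satisfies a parabolic equation with the same principal part, inherited nonlinear source, and an extra forcing $-\partial_t\Phi$ of size $O(\nu^{-1})$ (note that $\partial_x^2\Phi\equiv 0$). Testing this equation against $\tilde w$ and integrating over $[0,1]$ produces the dissipative term $\nu\int_0^1 D(\rho)(\partial_x\tilde w)^2\,dx$; combining with the Poincar\'e inequality gives a spectral gap of order $\nu$, while the nonlinear terms involving $\partial_x\rho$ are absorbed via Young's inequality and the estimates of Proposition~\ref{s17}. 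A Gronwall argument then yields $\|\tilde w(t)\|_2^2\le B(\varepsilon^2 e^{-c\nu t}+\nu^{-4})$, and an analogous test against $-\partial_x^2\tilde w$ produces the matching $H^1$-bound on $\|\partial_x\tilde w(t)\|_2^2$. The one-dimensional Agmon inequality $\|\tilde w\|_\infty^2\le 2\|\tilde w\|_2\,\|\partial_x\tilde w\|_2$ then upgrades these $L^2$ and $H^1$ bounds to $\|\tilde w(t)\|_\infty^2\le B(\varepsilon^2+\nu^{-4})$, and adding the estimate on $\Phi$ concludes the proof.

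The main obstacle is the quadratic nonlinearity $3\nu D'(\rho) w^2$ in the source term: a direct estimate using only the crude a priori bound $\|w\|_\infty=O(\varepsilon+\nu^{-1})$ coming from the boundary data and the initial condition yields $\nu w^2=O(\nu^{-1})$, which when fed through the energy method propagates only to an $L^\infty$ bound of order $\nu^{-2}$, short of the claimed $\nu^{-4}$. To close the argument one must bootstrap: first establish the crude bound via a maximum-principle or preliminary $L^\infty$ energy argument, then re-inject it into the nonlinear source and iterate, exploiting the exponential relaxation at rate $\nu$ to gain additional powers of $\nu^{-1}$. A careful separation of the two smallness parameters $\varepsilon$ and $\nu^{-1}$, together with the a priori bounds of Proposition~\ref{s17} on $\|\partial_x\rho\|_\infty$ and $\|\rho-\alpha\|_\infty$, is essential throughout.
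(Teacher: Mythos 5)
Your set-up (the equation for $w=\partial_x^2\rho$, the boundary identity, the $\nu$-order dissipation via Poincar\'e, Gronwall, and a final passage to $L^\infty$) is in the right spirit, but the proposal does not close, and the point where it fails is exactly the point where the paper's proof does real work. You yourself flag the quadratic source $3\nu D'(\rho)w^2$: with only the crude bound $\Vert w\Vert_\infty=O(\varepsilon+\nu^{-1})$ you get an order $\nu^{-2}$ output, and you defer the missing gain to an unspecified ``bootstrap''. That bootstrap is not a detail: the crude sup bound itself is only asserted (there is no maximum principle for this equation with the $\nu$-sized zero-order and quadratic sources without further argument), and it is not explained how re-injecting it gains powers of $\nu^{-1}$ rather than reproducing the same order. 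In the paper the extra smallness is obtained concretely, not by an $L^\infty$ iteration: everything is done for the transformed variable $d(\rho)$ (a primitive of $D$), the sup norm is controlled by the single estimate $\Vert w\Vert_\infty^2\le\int_0^1[\partial^3_xd(\rho)]^2dx$ with $w$ as in \eqref{24}, the energy inequality for $\int[\partial^3_xd(\rho)]^2$ (Assertion \ref{s08}) has forcing $\nu q^2\int[\partial^2_xd(\rho)]^2+\nu\int[\partial^2_xd(\rho)]^4+\nu^{-3}(\alpha'')^2$, and these are beaten down by the dedicated $L^2$ and $L^4$ estimates for $w$ (Assertions \ref{s07} and \ref{s11}, the latter being the $L^{2n}$ hierarchy with its own $\nu$-order dissipation, valid because $Q(t_0)$ is small by Proposition \ref{s17}); the convolutions against $e^{-a\nu(t-s)}$ then supply the missing factors $\nu^{-1}$. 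This chain of $L^{2n}$ estimates for $w$ is the idea your sketch is missing.

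There is also an unacknowledged step that fails as written: the claim that the forcing $-\partial_t\Phi$ from your boundary lifting is $O(\nu^{-1})$. Your boundary value $w(t,a)$ contains $(\partial_x\rho(t,a))^2$, and its time derivative involves $\partial_t\partial_x\rho(t,a)=\nu\,\partial^3_xd(\rho)(t,a)$ (up to bounded factors): a third derivative at the boundary multiplied by $\nu$, which is not controlled by Proposition \ref{s17} and is of the same nature as the quantity you are trying to estimate (even in the paper it is only controlled in $L^2_x$, which gives no pointwise boundary bound). So the homogenization step does not close as stated; one would need a trace/interpolation argument absorbed into the fourth-order dissipation, i.e.\ essentially the next level of the paper's hierarchy. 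The paper sidesteps this entirely by choosing $w=\partial^2_xd(\rho)-\alpha'(t)/\nu$, which vanishes \emph{exactly} at $x=0,1$ (since $\alpha'(t)=\nu\,\partial^2_xd(\rho)(t,a)$), so no lifting and no $\partial_t\Phi$ term ever appear. Finally, your intermediate claims $\Vert\tilde w\Vert_2^2,\Vert\partial_x\tilde w\Vert_2^2\le B(\varepsilon^2e^{-c\nu t}+\nu^{-4})$ are stronger than what the paper itself obtains at the $L^2$ level (and dubious, e.g.\ $\tilde w(0)$ carries an $O(\nu^{-1})$ contribution from $\Phi(0)$ when $\alpha'(0)\neq0$); the paper needs no Agmon interpolation because the third-derivative $L^2$ bound alone yields the sup estimate.
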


The proof of these propositions is divided in a sequence of
assertions. The Poincar\'e's inequality plays a fundamental role in
the argument. It states that there exists a finite constant $K_1$ such
that for every $C^1([0,1])$ function $f$ which vanishes at some point
$x\in [0,1]$,
\begin{equation*}
\int_0^1 f(x)^2\, dx \;\le\; K_1 \int_0^1 [f'(x)]^2\, dx
\;. 
\end{equation*}

Throughout this subsection, $c_0$, $C_0$ represent small and large
constants which depend only on $K_1$ and $D$.

Let 
\begin{equation}
\label{20}
\beta_1(t) \;=\; \sup_{0\le s\le t} |\alpha'(s)|\;,\;\;
\beta_2(t) \;=\; \sup_{0\le s\le t} |\alpha^{\prime\prime}(s)| \;. 
\end{equation}


\begin{asser}
\label{s04}
There exist positive constants $0<c_0<C_0<\infty$ such that for all
$t\ge 0$,
\begin{equation*}
\int_0^1 [\rho(t) - \alpha(t)]^2 \, dx \;\le 
e^{-c_0\nu t} \int_0^1 [\rho(0) - \alpha(0)]^2 \, dx
\;+\; \frac{C_0}{\nu^2} \, \beta_1(t)^2 \big( 1 - e^{-c_0\nu t} \big) \;.
\end{equation*}
\end{asser}

\begin{proof}
The proof follows classical arguments. Since $\rho(t)=\alpha(t)$ at the
boundary, an integration by parts and the fact that $\alpha(t)$ is
space independent yield that
\begin{equation*}
\frac 12\, \frac d{dt} \int_0^1 [\rho(t) - \alpha(t)]^2\, dx \;=\;
-\, \nu  \, \int_0^1 D(\rho(t))\, (\partial_x \rho(t))^2\, dx
\;-\;  \alpha'(t) \, \int_0^1 [\rho(t) - \alpha(t)]\, dx \;.
\end{equation*}
Since the diffusivity is bounded below by a strictly positive
constant, in the first term we may replace $D(\rho(t))$ by $c_0$ and
the identity by an inequality. By Poincar\'e's inequality, the
integral of $- (\partial_x \rho(t))^2$ is bounded by the integral of
$- K_1^{-1} [\rho(t) - \alpha(t)]^2$.  The second term on the right
hand side can be estimated by Young's inequality. One of the terms is
absorbed by what remained of the first term. The other one is
$(C_0/\nu) \alpha'(t)^2$.

Up to this point we have shown that
\begin{equation*}
\frac 12\, \frac d{dt} \int_0^1 [\rho(t) - \alpha(t)]^2\, dx \;\le\;
-\, c_0\, \nu  \, \int_0^1 [\rho(t) - \alpha(t)]^2\, dx
\;+\;  \frac{C_0}\nu \, \alpha'(t)^2  \;.
\end{equation*}
To complete the proof, it remains to apply Gronwall inequality.
\end{proof}

Let $d:[0,1]\to\bb R$ be a primitive of $D$, $d'=D$, and let
\begin{equation}
\label{26}
c_1 \;=\; \inf_{0\le \alpha\le 1} D(\alpha)\;, \quad
C_1 \;=\; \Vert (\log D)'\Vert_\infty\;.
\end{equation}

\begin{asser}
\label{s05}
Assume that $2K_1C_1\beta_1(t_0) < c_1\nu$ for some $t_0>0$. Then, there
exists a positive constants $C_0<\infty$ such that for all $0\le t\le
t_0$,
\begin{equation*}
\begin{split}
\int_0^1 [\partial_ xd(\rho(t))]^2\, dx \;\le\; e^{- a_\nu \, t} 
\int_0^1 [\partial_ xd(\rho(0))]^2\, dx
\; +\; \frac{C_0}{\nu} \int_0^t e^{-a_\nu (t-s)} \beta_1(s)^2 \, ds\;,
\end{split}
\end{equation*}
where $a_\nu = (c_1/K_1) \nu - 2 C_1 \beta_1(t_0)$.
\end{asser}

\begin{proof}
The proof is similar to the previous one. Adding and subtracting
$\alpha'(t)$ we have that
\begin{equation*}
\begin{split}
\frac 12\, \frac d{dt} \int_0^1  [\partial_ xd(\rho(t))]^2 \, dx
\;& =\; \int_0^1  \partial_ x d(\rho(t)) \,
\partial_ x \Big\{ D(\rho(t)) \big[ 
\nu \, \partial^2_x d(\rho(t)) - \alpha'(t) \big] \Big\} \, dx \\
& +\; \alpha'(t) \int_0^1 \partial_ x d(\rho(t)) \,
\partial_ x D(\rho(t)) \, dx \;.
\end{split}
\end{equation*}
Since $\alpha(t)=\rho(t,0)=\rho(t,1)$, $\alpha'(t) = \nu
\, \partial^2_x d(\rho(t,0)) = \nu \, \partial^2_x d(\rho(t,1))$. In
particular, we may integrate by parts the first term on the right hand
side. This operation yields a negative term and one involving
$\alpha'(t)$. This latter expression can be estimated through Young's
inequality. The first piece is absorbed into the negative term and the
second piece is bounded by $(C_0/\nu) \alpha'(t)^2$. Hence,
\begin{equation*}
\begin{split}
\frac 12\, \frac d{dt} \int_0^1  [\partial_ xd(\rho(t))]^2 \, dx
\;& \le \; - \frac {c_1 \nu}2 \, \int_0^1  
[\partial^2_ x d(\rho(t))]^2 \, dx \;+\; \frac {C_0}\nu \,
\alpha'(t)^2 \\
& +\; C_1 \, |\alpha'(t)| \,
\int_0^1 [\partial_ x d(\rho(t))]^2 \, dx \;.
\end{split}
\end{equation*}
Since $\int_0^1 \partial_ x d(\rho(t)) dx =0$, applying Poincar\'e
inequality to the first term on the right hand side, we obtain that
the last expression is bounded above by
\begin{equation*}
- \Big[ \frac{c_1 \, \nu}{2K_1} - C_1 \beta_1(t) \Big] \int_0^1  [\partial_ x d(\rho(t))]^2
\, dx  \;+\; \frac {C_0}\nu \, \beta_1(t)^2 \;.
\end{equation*}
To complete the proof, it remains to replace $\beta_1(t)$ by
$\beta_1(t_0)$ in the term inside brackets, getting an expression
which is positive by assumption, and to apply Gronwall inequality.
\end{proof}

\begin{lemma}
\label{s18}
Assume that $c_1\nu > 2K_1C_1\beta_1(t_0)$ for some $t_0>0$. Then,
there exist positive constants $0<c_0<C_0<\infty$ such that for all
$0\le t\le t_0$,
\begin{equation*}
\Vert \rho(t) - \alpha (t) \Vert^2_\infty \;\le\;
C_0 e^{- c_0 \nu \, t} e^{C_0 \beta(t_0) t} 
\int_0^1 [\partial_ xd(\rho(0))]^2\, dx
\; +\; \frac{1}{\nu^2} \frac{C_0\beta_1(t)^2}{1-(A_1\beta_1(t_0)/\nu)}\;,
\end{equation*}
where $A_1= 2K_1 C_1/c_1$. 
\end{lemma}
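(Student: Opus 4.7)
The plan is to deduce Lemma \ref{s18} from Assertion \ref{s05} via a one–dimensional Sobolev inequality applied to the function $\rho(t,\cdot) - \alpha(t)$, which vanishes at both endpoints of $[0,1]$. No heavy machinery is needed; the difficulty is mainly bookkeeping of constants so that the exponential rate $a_\nu$ from Assertion \ref{s05} can be rewritten in the desired form $c_0\nu - C_0 \beta_1(t_0)$ and the coefficient $1/(\nu a_\nu)$ in the form $C_0/[\nu^2(1-A_1\beta_1(t_0)/\nu)]$.

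First I would establish the elementary inequality
\begin{equation*}
\Vert \rho(t) - \alpha(t) \Vert_\infty^{\,2} \;\le\; \Vert \partial_x \rho(t) \Vert_2^{\,2}\;.
\end{equation*}
This follows from $\rho(t,0)=\rho(t,1)=\alpha(t)$ by writing $\rho(t,x)-\alpha(t) = \int_0^x \partial_y\rho(t,y)\,dy$ and applying Cauchy--Schwarz. Next, since $d'=D \ge c_1$, the identity $\partial_x d(\rho) = D(\rho)\partial_x \rho$ gives
\begin{equation*}
\Vert \partial_x \rho(t) \Vert_2^{\,2} \;\le\; \frac{1}{c_1^2}\int_0^1 [\partial_x d(\rho(t))]^2\,dx\;.
\end{equation*}

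Second, I would apply Assertion \ref{s05} (which is valid precisely under the hypothesis $2K_1C_1\beta_1(t_0)<c_1\nu$) to bound the right-hand side. The remaining integral $(C_0/\nu)\int_0^t e^{-a_\nu(t-s)}\beta_1(s)^2\,ds$ is treated by using that $s\mapsto\beta_1(s)$ is non-decreasing, which gives
\begin{equation*}
\frac{C_0}{\nu}\int_0^t e^{-a_\nu(t-s)}\beta_1(s)^2\,ds \;\le\; \frac{C_0\,\beta_1(t)^2}{\nu\, a_\nu}\,(1-e^{-a_\nu t}) \;\le\; \frac{C_0\,\beta_1(t)^2}{\nu\, a_\nu}\;.
\end{equation*}

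Third, I would rewrite the rate and coefficient in the stated form. Since $a_\nu = (c_1/K_1)\nu - 2C_1\beta_1(t_0) = (c_1/K_1)\nu\bigl(1-A_1\beta_1(t_0)/\nu\bigr)$ with $A_1 = 2K_1C_1/c_1$, the coefficient becomes
\begin{equation*}
\frac{C_0}{\nu\, a_\nu} \;=\; \frac{C_0\,K_1/c_1}{\nu^2\bigl(1-A_1\beta_1(t_0)/\nu\bigr)}\;,
\end{equation*}
and the exponential factor splits as
\begin{equation*}
e^{-a_\nu t} \;=\; e^{-(c_1/K_1)\nu\, t}\, e^{2C_1\beta_1(t_0)\,t}\;.
\end{equation*}
Combining these ingredients, with the constants absorbed into $c_0$ and $C_0$, yields the claimed bound. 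The only subtlety is that the hypothesis $c_1\nu > 2K_1C_1\beta_1(t_0)$ must be used at two places: to apply Assertion \ref{s05}, and to guarantee that $1-A_1\beta_1(t_0)/\nu>0$ so that the $1/\nu^2$-term in the conclusion is meaningful.
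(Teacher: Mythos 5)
Your proof is correct and follows essentially the same route as the paper: the pointwise bound $|\rho(t,x)-\alpha(t)|^2\le C_0\int_0^1[\partial_x\rho(t)]^2\,dx$ via the fundamental theorem of calculus and Cauchy--Schwarz, the comparison $\partial_x\rho = D(\rho)^{-1}\partial_x d(\rho)$ with $D\ge c_1$, and then Assertion \ref{s05} with $\beta_1(s)^2$ replaced by $\beta_1(t)^2$ in the time integral. Your explicit bookkeeping of the rate $a_\nu=(c_1/K_1)\nu(1-A_1\beta_1(t_0)/\nu)$ just makes precise what the paper leaves implicit.
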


\begin{proof}
Assume that $2K_1C_1\beta_1(t_0) < c_1\nu$ for some $t_0>0$ and fix
$0<t\le t_0$. Since $\alpha(t)=\rho(t,0)$, by Schwarz inequality there
exists a finite constant $C_0$ such that for every $x\in [0,1]$,
\begin{equation*}
|\rho(t,x) - \alpha(t)|^2 \;\le\; C_0 \int_0^1 [\partial_x \rho (t)]^2
\, dx \;\le\; C_0 \int_0^1 [\partial_x d(\rho (t))]^2 \, dx \;.
\end{equation*}
To complete the proof, it remains to recall Assertion \ref{s05} and to
estimate the term $\beta_1(s)^2$ appearing in the time integral by
$\beta_1(t)^2$.
\end{proof}

Let $F_n$, $G_n:\bb R_+\to\bb R$, $n\ge 1$, be given by
\begin{equation}
\label{16}
F_n(t) \;=\; \int_0^1 [\partial_x d (\rho(t))]^{2n}\, dx\;, \quad
G_n(t) \;=\; \int_0^1 [\partial^2_x d (\rho(t))]^2  
\, [\partial_x d (\rho(t))]^{2n}\, dx\;.
\end{equation}

\begin{asser}
\label{s15}
For all $n\ge 2$, there exist positive constants $0<c_0<C_0<\infty$,
$b_0>0$, such that for all $0<b<b_0$, $t\ge 0$,
\begin{equation*}
\begin{split}
& F_n(t) \;+\; c_0 \, n^2 \, \nu \int_0^t G_{n-1}(s)\, e^{- b
  \nu (t-s)}\, ds \\
&\qquad\qquad \;\le\; e^{-b \nu t} \,  F_n(0) 
\;+\; C_0 \frac{n^2}\nu\, \beta_1(t)^2 \int_0^t F_{n-1}(s)\, e^{-b
  \nu (t-s)}\, ds \;.
\end{split}
\end{equation*}
\end{asser}

\begin{proof}
Since $\alpha'(t) = \partial_t \rho (t,1) = \nu \partial^2_x d (\rho
(t,1))$, adding and subtracting $\alpha'(t)$, and then integrating by
parts yield that
\begin{equation*}
\begin{split}
F'_n(t) \; &=\; - 2n (2n-1) \, \nu \int_0^1  D (\rho(t))\, 
[\partial_x d (\rho(t))]^{2n-2}\, [\partial^2_x d (\rho(t))]^2 \, dx \\
&+\;  2n (2n-1) \, \alpha'(t)  \int_0^1  D (\rho(t))\,
[\partial_x d (\rho(t))]^{2n-2}\, \partial^2_x d (\rho(t)) \, dx \\
& +\; 2n \, \alpha'(t)  \int_0^1  [\partial_x d (\rho(t))]^{2n-1}\, 
\partial_x D (\rho(t))\, dx\;.
\end{split}
\end{equation*}
Apply Young's inequality to the second term on the right hand side to
bound it by the sum of two terms. The first one can be absorbed by the
first term on the right hand side, and the second one is bounded by
$C_0 (n^2/\nu) \alpha'(t)^2 F_{n-1}(t)$. In the last term on the right
hand side, replace $\partial_x D (\rho(t))$ by $\partial_x [D
(\rho(t)) - D(\alpha(t))]$ and integrate by parts to obtain that it is
equal to
\begin{equation*}
- 2n(2n-1) \, \alpha'(t)  \int_0^1  [\partial_x d (\rho(t))]^{2(n-1)}\, 
\partial^2_x d (\rho(t))\, [D (\rho(t)) - D(\alpha(t))] \, dx\;.
\end{equation*}
Apply Young's inequality to bound this expression by the sum of two
terms. The first one can be absorbed by the first term on the
penultimate formula, while the second one is less than or equal to
$C_0 n^2 \alpha'(t)^2 \nu^{-1} F_{n-1}(t)$. Therefore,
\begin{equation*}
F'_n(t) \; \le \; - c_0\, n^2 \, \nu \, G_{n-1}(t)
\; +\; C_0 \, n^2 \, \frac{\alpha'(t)^2}{\nu}  F_{n-1}(t) \;.
\end{equation*}

Let $f(x) = [\partial_x d (\rho(t,x))]^n$. Since $\int_0^1 \partial_x
d (\rho(t,x)) \, dx =0$, there exists $x_0\in [0,1]$ such that
$\partial_x d (\rho(t,x_0))=0$, so that $f(x_0)=0$. We may therefore
apply Poincar\'e's inequality to $[\partial_x d (\rho(t,x))]^n$ to
obtain that
\begin{equation*}
F_n(t) \;=\; \int f(x)^2 \, dx \;\le\; K_1 \int f'(x)^2 \, dx \;=\;
K_1 n^2 G_{n-1}(t)\;.
\end{equation*}
It follows from the previous estimates that
\begin{equation*}
F'_n(t) \; \le \; - b_0\, \nu \, F_{n}(t) \;-\; c_0\, n^2 \, \nu \, G_{n-1}(t)
\; +\; C_0 \, n^2 \, \frac{\alpha'(t)^2}{\nu}  F_{n-1}(t) 
\end{equation*}
for some $b_0>0$. The same inequality remains in force for any
$0<b<b_0$. It remains to apply Gronwall inequality to complete the
proof.
\end{proof}

Iterating the inequality appearing in the previous assertion without
the term $G_{n-1}$ yields

\begin{asser}
\label{s06}
For all $n\ge 2$, there exist positive constants $0<c_0<C_0<\infty$,
$b_0>0$, such that for all $0<b<b_0$, $t\ge 0$,
\begin{equation*}
F_n(t) \;+\; c_0 n^2 \nu \int_0^t e^{- b \nu (t-s)} G_{n-1}(s)\, ds 
\;\le\; r_n(t)\;,
\end{equation*} 
where $r_n(t) = r_n(t,b,C_0)$ is given by
\begin{align*}
r_n(t) \; &=\; C_0 \sum_{k=2}^n F_{k}(0) \Big(\frac{t [n\beta_1(t)]^2}
{\nu}\Big)^{n-k}\, \frac {e^{- b \nu t}}{(n-k)!} \\
& +\; C_0 \Big(\frac{[n\beta_1(t)]^2} {\nu}\Big)^{n-1}\, \int_0^t
e^{- b \nu (t-s)} \, \frac{(t-s)^{n-2}}{(n-2)!}\, F_1(s) \, ds\;.
\end{align*}
\end{asser}

Let $w:\bb R_+ \times [0,1] \to \bb R$ be given by
\begin{equation}
\label{24}
w(t,x) \;=\; [\partial^2_ xd(\rho_t)](x) \;-\; 
\frac 1{\nu} \, \alpha'(t)\;.
\end{equation}

\begin{asser}
\label{s07}
There exist positive constants $0<c_0<C_0<\infty$ such that
\begin{equation*}
\int_0^1 w(t)^2\, dx \;\le\; e^{-c_0\nu t} \int_0^1 w(0)^2\, dx \;+\;
\frac{C_0}{\nu^3} \int_0^t \alpha^{\prime\prime} (s)^2 \, ds 
+\, r_2(t)\;,
\end{equation*}
where the remainder $r_2$ has been introduced in Assertion \ref{s06}.
\end{asser}

\begin{proof}
The proof is similar to the one of Assertion \ref{s04}.
We first show that
\begin{equation*}
\begin{split}
\frac d{dt} \int_0^1 w(t)^2\, dx \; &\le\; -\, \nu \int_0^1 D(\rho_t)
\, [\partial^3_ xd(\rho_t)]^2\, dx \; +\; \frac{1}{A \nu^3} \, \alpha^{\prime\prime} (t)^2\\
& +\; \nu  \int_0^1 \frac 1{D(\rho_t)} \, [\partial_x D(\rho_t)]^2
\, [\partial^2_ xd(\rho_t)]^2\, dx \;
+\;  A \, \nu\, \int_0^1  w(t)^2\, dx 
\end{split}
\end{equation*}
for any $A>0$.  As $w(t)$ vanishes at $x=0$, apply Poincar\'e's to this
functions to get that
\begin{equation*}
\int_0^1  \, w(t)^2\, dx
\;\le\; K_1 \, \int_0^1  \, [\partial^3_ x d(\rho_t)]^2\, dx \;.
\end{equation*}
Hence, choosing $A$ small enough yields
\begin{equation*}
\frac d{dt} \int_0^1 w(t)^2\, dx \;\le\;
-\, c_0 \nu \int_0^1 w(t)^2\, dx 
 \; +\; \frac{C_0}{\nu^3} \, \alpha^{\prime\prime} (t)^2
\;+\; C_0 \nu  G_1(t) \;,
\end{equation*}
where the function $G_1$ has been introduced in \eqref{16}.
We may replace the constant $c_0$ by one which is smaller than the
constant $b_0$ appearing in the statement of Assertion \ref{s06}.
By Gronwall inequality,
\begin{equation*}
\int_0^1 w(t)^2\, dx \;\le\; e^{-c_0\nu t} \int_0^1 w(0)^2\, dx \;+\;
C_0 \int_0^t e^{-c_0\nu(t-s)} \Big\{ \frac{\alpha^{\prime\prime} (s)^2}{\nu^3} \, 
+\, \nu  G_1(s) \Big\}\, ds \;.
\end{equation*}
To complete the proof of the lemma, it remains to recall the statement
of Assertion \ref{s06}.
\end{proof}

\begin{lemma}
\label{s14}
Assume that $c_1\nu > 2K_1C_1\beta_1(t_0)$ for some $t_0>0$. Then,
there exist positive constants $0<c_0<C_0<\infty$ such that for all
$0\le t\le t_0$,
\begin{align*}
\Vert \partial_x d (\rho(t)) \Vert^2_\infty \; & \le\;
C_0 e^{-c_0\nu t} \Big\{ \int_0^1 w(0)^2\, dx  \;+\;
 F_2(0) \;+\; \frac 1\nu e^{C_0 \beta_1(t_0)} \, t\,  F_1(0) \Big\} \\
& \quad +\; \frac{C_0}{\nu^2} \Big\{ \alpha'(t)^2 \;+\;
\frac{1}{\nu} \int_0^t \alpha^{\prime\prime} (s)^2 \, ds
\;+\; \frac{1}{\nu^2} e^{C_0 \beta_1(t_0) [1+t]} \Big\}\;.
\end{align*}
\end{lemma}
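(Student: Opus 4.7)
The plan is to reduce the $L^\infty$ bound on $\partial_x d(\rho(t))$ to an $L^2$ bound on $\partial_x^2 d(\rho(t))$, and then feed that into Assertion \ref{s07}, which already controls the corresponding $L^2$ quantity through the auxiliary function $w(t)$.

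First I would use the boundary conditions $\rho(t,0)=\rho(t,1)=\alpha(t)$ to note that $\int_0^1 \partial_x d(\rho(t,x))\, dx = 0$, so $\partial_x d(\rho(t,\cdot))$ vanishes at some $x_0 \in [0,1]$. Writing
\[
\partial_x d(\rho(t,x)) \;=\; \int_{x_0}^x \partial_y^2 d(\rho(t,y))\, dy
\]
and applying Cauchy--Schwarz yields the Sobolev-type estimate $\Vert \partial_x d(\rho(t))\Vert_\infty^2 \le \Vert \partial_x^2 d(\rho(t))\Vert_2^2$. The definition \eqref{24} of $w$ combined with the triangle inequality then gives
\[
\Vert \partial_x^2 d(\rho(t))\Vert_2^2 \;\le\; 2\int_0^1 w(t)^2\, dx \;+\; \frac{2\alpha'(t)^2}{\nu^2}\;,
\]
which already accounts for the $\alpha'(t)^2/\nu^2$ piece in the statement.

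Next I would invoke Assertion \ref{s07} to handle $\int w(t)^2 dx$: this supplies the $e^{-c_0\nu t}\int w(0)^2 dx$ term, the $\nu^{-3}\int_0^t \alpha''(s)^2 ds$ term, and the remainder $r_2(t)$. Writing $r_2(t)$ explicitly via Assertion \ref{s06} with $n=2$ produces a $C_0 F_2(0) e^{-b\nu t}$ contribution along with $C_0\nu^{-1}\beta_1(t)^2 \int_0^t e^{-b\nu(t-s)} F_1(s)\, ds$. Into this last convolution I would substitute the estimate $F_1(s) \le e^{-a_\nu s} F_1(0) + C_0\beta_1(t_0)^2/\nu^2$ from Assertion \ref{s05}, with $a_\nu = (c_1/K_1)\nu - 2C_1\beta_1(t_0)$. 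Splitting $e^{-a_\nu s} = e^{-(c_1/K_1)\nu s}\, e^{2C_1\beta_1(t_0) s}$ is where the $e^{C_0\beta_1(t_0)}$ prefactor in the statement arises, and the crude bound $\int_0^t e^{-b\nu(t-s)-a_\nu s}\, ds \le t\, e^{-b\nu t}$ (valid once $b\nu \le a_\nu$, hence for $\nu$ large enough) converts the convolution into the claimed $\nu^{-1} t\, e^{-c_0\nu t} e^{C_0\beta_1(t_0)} F_1(0)$ contribution. The residual $\beta_1(t_0)^2/\nu^2$ part of $F_1(s)$, multiplied by the $\beta_1(t)^2/\nu$ prefactor and integrated in $s$, yields a term of order $\beta_1(t_0)^4/\nu^4$ that fits inside the stated $\nu^{-4} e^{C_0\beta_1(t_0)[1+t]}$ block.

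The main obstacle is organizational rather than analytical: one has to keep careful track of how the two decay rates $b\nu$ and $a_\nu$ recombine in the convolution so that a single rate $c_0\nu$ emerges outside, with the $\beta_1(t_0)$-dependent discrepancy absorbed into the prefactor $e^{C_0\beta_1(t_0) t}$, and to verify that every lower-order residual collapses into the advertised $\nu^{-2}$ or $\nu^{-4}$ block. No new analytical ingredient beyond Assertions \ref{s05}, \ref{s06} and \ref{s07} is required.
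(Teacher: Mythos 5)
Your proposal is correct and follows essentially the same route as the paper's proof: bound $\Vert \partial_x d(\rho(t))\Vert_\infty^2$ by $\int_0^1 [\partial_x^2 d(\rho(t))]^2\,dx$ using the vanishing mean and Schwarz, pass to $w(t)$ via Young's inequality to extract the $\alpha'(t)^2/\nu^2$ term, apply Assertion \ref{s07}, and then bound $r_2(t)$ through Assertions \ref{s06} and \ref{s05}. The only cosmetic difference is that you impose $b\nu\le a_\nu$ (``$\nu$ large enough'') to handle the convolution, whereas the splitting $e^{-a_\nu s}\le e^{-(c_1/K_1)\nu s}e^{2C_1\beta_1(t_0)t_0}$ that you already mention makes this extra restriction unnecessary under the lemma's stated hypothesis.
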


\begin{proof}
Assume that
$2K_1C_1\beta_1(t_0) < c_1\nu$ for some $t_0>0$ and fix $0<t\le
t_0$. Since $\int_0^1 (\partial_x d(\rho (t)) \, dx =0$, subtracting
this integral and applying Schwarz inequality, we get that for all
$x\in [0,1]$,
\begin{equation*}
[\partial_x d (\rho(t,x))]^2 \;\le\; C_0 \int_0^1 [\partial^2_x d
(\rho(t,y))]^2 dy\;.
\end{equation*}
Adding and subtracting $\alpha'(t)/\nu$, by Young's inequality, the
previous expression is less than or equal to
\begin{equation*}
C_0 \int_0^1 w(t)^2 dy \;+\; C_0 \frac{\alpha'(t)^2}{\nu^2} \;\cdot
\end{equation*}

By Assertion \ref{s07}, the first term of the previous expression is
less than or equal to
\begin{equation*}
C_0 e^{-c_0\nu t} \int_0^1 w(0)^2\, dx \;+\;
\frac{C_0}{\nu^3} \int_0^t \alpha^{\prime\prime} (s)^2 \, ds \;+\; r_2(t)\;.
\end{equation*}
By its definition and by Assertion \ref{s05}, 
\begin{equation*}
r_2(t) \;\le\; C_0 e^{-c_0\nu t} \Big\{ F_2(0) \;+\; \frac 1\nu 
e^{C_0 \beta_1(t_0)} \, t\,  F_1(0) \Big\} \;+\; \frac{C_0}{\nu^4}
e^{C_0 \beta_1(t_0) [1+t]}\;.
\end{equation*}
To complete the proof it remains to recollect all previous estimates.
\end{proof}

Let
\begin{equation}
\label{21}
q(t)\;=\; \Vert \partial_ x d (\rho_t)\Vert_\infty\;,
\quad Q(t) \;=\; \sup_{0\le s\le t} q(s)\;, 
\quad t\,\ge\, 0\;.
\end{equation}

\begin{asser}
\label{s11}
Suppose that $Q(t_0) (1+n^2K_1) < c_1$ for some $t_0>0$. Then, there
exists a positive constant $C_0<\infty$ such that for all $0\le t\le
t_0$,
\begin{equation*}
\int_0^1 w(t)^{2n}\, dx \;\le\;
e^{-a_n \nu t} \int_0^1 w(0)^{2n}\, dx \;+\; 
\int_0^t e^{-a_n \nu (t-s)}  H_{n-1}(s) \, ds  \;, 
\end{equation*}
where $a_n = [c_1 - Q(t_0) (1+n^2K_1)]/K_1$ and
\begin{equation*}
H_{n-1}(s) \;=\; C_0 \Big\{ n^2 q(s)^2 \, \frac {\alpha'(s)^2}{\nu} \, +
\frac{\alpha^{\prime\prime}(s)^2}{\nu^3} \Big\} \, \int_0^1
w(s)^{2(n-1)} \, dx\;.
\end{equation*}
\end{asser} 

\begin{proof}
Since $w(t)$ vanishes at the boundary, an integration by parts yields
that the time derivative of $\int_0^1 w(t)^{2n}\, dx$  is equal to
\begin{align}
\label{18}
& -2n(2n-1) \, \nu\, \int_0^1 w(t)^{2(n-1)} D(\rho_t) \,
[\partial^3_ xd(\rho_t)]^{2}\, dx \\
& -\; 2n(2n-1) \, \nu\, \int_0^1 w(t)^{2(n-1)} \partial_xD(\rho_t) \,
w(t) \, \partial^3_x d(\rho_t)\, dx \nonumber \\
&-\; 2n(2n-1) \alpha'(t) \, \int_0^1 w(t)^{2(n-1)} \partial_xD(\rho_t) \,
\partial^3_x d(\rho_t)\, dx
\,-\, \frac{2n \alpha^{\prime\prime}(t)}\nu\, 
\int_0^1 w(t)^{2n-1} \, dx \;. \nonumber
\end{align}
In this formula, in the second line, we added and subtracted $(1/\nu)
\alpha'(t)$ to recover $w(t)$ from $\partial^2_x d(\rho_t)$.

Recall the definition of $q(t)$, introduced in \eqref{21}, and the one
of the constants $c_1$, $C_1$, defined in \eqref{26}. Estimating
$\partial_xD(\rho_t)$ by $C_1 q(t)$, and applying Young inequality to
the last three terms of the previous displayed equation, we obtain
that the time derivative of $\int_0^1 w(t)^{2n}\, dx$ is bounded by
\begin{align*}
& -2n\, (2n-1) \, \nu\, \Big\{ c_1 - \frac{ q(t)}2 - \frac 1A \Big\} 
\, \int_0^1 w(t)^{2(n-1)}\,  [\partial^3_ xd(\rho_t)]^{2}\, dx \\
& \quad +\; 2n(2n-1) \, \nu\, \Big\{\frac {q(t)}2 + \frac 1A \Big\} 
\, \int_0^1 w(t)^{2n} \, dx \\
& \qquad +\; C_0 \Big\{ n^2 A q(t)^2 \, \frac {\alpha'(t)^2}{\nu} \, +
\frac{\alpha^{\prime\prime}(t)^2}{\nu^3} \Big\} \, \int_0^1
w(t)^{2(n-1)} \, dx \;,
\end{align*}
for every $A>0$. 

Let $f(x) = w(t,x)^n$. Since $f$ vanishes at the
boundary and since $f'(x) = n w(t,x)^{n-1} \partial^3_ xd(\rho_t)$, by
Poincar\'e's inequality,
\begin{equation*}
\int_0^1 w(t)^{2n} \, dx \;=\; \int_0^1 f^{2} \, dx \;\le\; 
K_1 \int_0^1 [f^{\prime}]^2 \, dx \;=\; 
K_1 n^2 \int_0^1 w(t)^{2(n-1)} \, [\partial^3_ xd(\rho_t)]^{2}\, dx\;.
\end{equation*}
Set $A=2(1+n^2K_1)/c_1$. With this choice, by assumption, $c_1 -
q(t)/2 - 1/A>0$. In particular, the sum of the first two line of the
penultimate displayed equation is less than or equal to
\begin{equation*}
- \frac{2n(2n-1) \, \nu}{2n^2 K_1} \, \Big\{c_1 - q(t) (1+n^2K_1)
\Big\} \, \int_0^1 w(t)^{2n} \, dx \;.
\end{equation*}
Since $2n(2n-1)/2n^2 \ge 1$, to complete the proof it remains to apply
Gronwall inequality.
\end{proof}

\begin{asser}
\label{s08}
Assume that $2(1+K_1) C_1 Q(t_0) <c_1$ for some $t_0>0$. Then,
there exist positive constants $0<c_0<C_0<\infty$ such that for all
$0\le t\le t_0$,
\begin{equation*}
\int_0^1 [\partial^3_ xd(\rho_t)]^2\, dx \;\le e^{-a \nu t} \, 
\int_0^1 [\partial^3_ xd(\rho_0)]^2\, dx \;+\; \int_0^t 
e^{-a \nu (t-s)} H(s)\, ds\;,
\end{equation*}
where $a=[c_1-2(1+K_1) C_1 Q(t_0)]/2K_1$, and
\begin{equation*}
\begin{split}
H(s) \;=\; C_0\, \nu\, q(s)^2 \int_0^1  [\partial^2_x d(\rho_s)]^2\,
dx \; + \; C_0\, \nu\, \int_0^1  [\partial^2_x d(\rho_s)]^4\, dx
\;+\; \frac{C_0}{\nu^3} [\alpha^{\prime\prime}(s)]^2 \;.
\end{split}
\end{equation*}
\end{asser}

\begin{proof}
The proof is similar to the one of Assertion \ref{s05}. Fix $t_0>0$
satisfying the hypothesis of the lemma and consider some $t< t_0$. Since
$\rho(t,1)=\alpha_t$, $\alpha^{\prime\prime}(t) = \nu^2
\, \partial^2_x [D (\rho(t)) \partial^2_x d(\rho(t))]$ at $x=0$,
$1$. Adding and subtracting $\nu^{-1} \alpha^{\prime\prime}(t)$,
and integrating by parts, we have that
\begin{equation}
\label{25}
\begin{split}
\frac 12\, \frac d{dt} \int_0^1  [\partial^3_ xd(\rho_t)]^2 \, dx
\;& =\; -\, \nu \int_0^1  \partial^4_ x d(\rho_t) \,
\partial^2_ x \big\{ D(\rho_t) \,\partial^2_x d(\rho_t)\big\} \, dx \\
& +\; \nu^{-1} \alpha^{\prime\prime}(t) \int_0^1 \partial^4_ x d(\rho_t) \, dx \;.
\end{split}
\end{equation}

Let $D_1 (\alpha) = (\log D)'(\alpha)$, $D_2 (\alpha) = (\log
D)^{\prime\prime}(\alpha)/ D (\alpha)$. Expand $\partial^2_x \{
D(\rho_t) \,\partial^2_x d(\rho_t)\}$, and observe that $\partial^2_ x
D(\rho_t) = D_1(\rho_t) \partial^2_ x d(\rho_t) + D_2(\rho_t)
[\partial_ x d(\rho_t)]^2$ to write the first term on the right hand
side of the previous formula as
\begin{align*}
& -\, \nu \int_0^1  D(\rho_t) \, [\partial^4_ x d(\rho_t)]^2 \, dx
\;-\; 2\nu \int_0^1  \partial_x D(\rho_t) \, \partial^3_ x d(\rho_t)
\,  \partial^4_ x d(\rho_t) \, dx \\
& -\; \nu \int_0^1  D_1(\rho_t)\, 
[\partial^2_x d(\rho_t)]^2 \, \partial^4_ x d(\rho_t) \, dx 
\; -\; \nu \int_0^1  D_2(\rho_t)\, [\partial_x d(\rho_t)]^2 
\partial^2_x d(\rho_t) \, \partial^4_ x d(\rho_t) \, dx \;.
\end{align*}
Recall the definition of $q(t)$ introduced in \eqref{21}, and recall
that $c_1= \inf_{0\le \alpha\le 1} D(\alpha)$, $C_1 = \Vert
D_1\Vert_\infty$. Apply Young's inequality to the last three terms and
to the last term in \eqref{25} to obtain that the left hand side of
\eqref{25} is less than or equal to
\begin{align}
\label{17}
& -\, \nu \,\big[ c_1 - \frac 2A - C_1 q(t) \big] 
\int_0^1  [\partial^4_ x d(\rho_t)]^2 \, dx 
\; + \; C_1 \, \nu\, q(t) \, \int_0^1  [\partial^3_ x d(\rho_t)]^2 \, dx \\
& \qquad + \; A \, C_0\, \nu\, q(t)^2 \int_0^1  [\partial^2_x d(\rho_t)]^2\,
dx \; + \; A \, C_0\, \nu\, \int_0^1  [\partial^2_x d(\rho_t)]^4\, dx
\;+\; \frac{A}{\nu^3} [\alpha^{\prime\prime}(t)]^2
\nonumber
\end{align}
for all $A>0$. 

Since $\partial^2_ x d(\rho(t,1)) = \partial^2_ x d(\rho(t,0))$,
$\int_0^1 \partial^3_ x d(\rho_t) \, dx =0$. Therefore, by
Poincar\'e's inequality,
\begin{equation*}
\int_0^1  [\partial^3_ x d(\rho_t)]^2 \, dx \;\le\;
K_1 \int_0^1  [\partial^4_ x d(\rho_t)]^2 \, dx\;.
\end{equation*}
Set $A= 4/c_1$. Since, by hypothesis, $2 C_1 q(t) \le 2 C_1 Q(t_0) <
c_1$, the first line of \eqref{17} is bounded by
\begin{equation*}
- \, \frac{\nu}{2 K_1} \,\big[ c_1 - 2 C_1 \, [1+K_1] \, q(t) \big]  
\, \int_0^1  [\partial^3_ x d(\rho_t)]^2 \, dx
\;\le\; - a\, \nu \, \int_0^1  [\partial^3_ x d(\rho_t)]^2 \, dx \;,
\end{equation*}
where $a$ has been introduced in the statement of the assertion.  To
complete the proof, it remains to apply Gronwall inequality.
\end{proof}

\begin{proof}[Proof of Proposition \ref{s17}]
The claims are straightforward consequences of Lemmas
\ref{s18} and \ref{s14}. We turn to the third assertion.
\end{proof}

\begin{proof}[Proof of Proposition \ref{s19}]
Since $\partial^2_ xd(\rho) (t,1) - (1/\nu) \alpha'(t)$ vanish as
$x=0$, by Schwarz inequality, for any $x_0\in [0,1]$,
\begin{equation*}
[\partial^2_ xd(\rho_t) (x_0) - \alpha'(t)]^2 \;\le\;
\int_0^1 [\partial^3_ xd(\rho_t) (x)]^2 \;.
\end{equation*}

Fix $t_0>0$.  By Proposition \ref{s17}, $Q(t_0)^2 \le C_0 \delta^2$,
where $\delta^2 = \varepsilon^2 + \nu^{-2}$. Therefore, there exist
$\varepsilon_0>0$ and $\nu_0<\infty$ with the property that the
hypothesis of Assertion \ref{s08} is in force for all
$\varepsilon<\varepsilon_0$, $\nu>\nu_0$. In particular, the previous
expression is bounded by
\begin{equation*}
C_0 \varepsilon^2 \;+\; \int_0^t 
e^{-a \nu (t-s)} H(s)\, ds\;,
\end{equation*}
where $H$ has been introduced in the statement of Assertion
\ref{s08}. By Proposition \ref{s17}, which permits to estimate
$q(s)^2$, by adding and subtracting $\alpha'(s)$ to $\partial^2_
xd(\rho_s)$, which permits to recover the function $w(s)$ introduced
in \eqref{24}, the second term of the previous equation is less than
or equal to
\begin{equation*}
C_0 \Big\{ \frac{1}{\nu^4} + \frac{\varepsilon^2}{\nu^2}\Big\} 
\;+\; C_0 \, \nu \int_0^t ds\, e^{-a \nu (t-s)} 
\int_0^1 \big\{ \delta^2 w(s)^2 + [\partial^2_xd(\rho_s)]^4 \big\}\, dx\;.
\end{equation*}
By Assertion \ref{s07}, this sum is bounded by
\begin{equation*}
C_0 \Big\{ \frac{1}{\nu^4} + \varepsilon^4 \Big\} 
\;+\; C_0 \, \nu \int_0^t ds\, e^{-a \nu (t-s)} 
\Big\{ \delta^2 r_2(s) + \int_0^1 [\partial^2_xd(\rho_s)]^4 \, dx\Big\}\;,
\end{equation*}
By Assertion \ref{s05}, $r_2(s)\le C_0 \delta^2$. We may thus remove
$r_2$ from the previous formula. By Young inequality, by Assertion
\ref{s11} and by Proposition \ref{s17}, the second term without
$r_2(s)$ is less than or equal to
\begin{equation*}
C_0 \Big\{ \frac{1}{\nu^4} + \varepsilon^4 \Big\}  \;+\;
C_0 \delta^2 \int_0^t ds\, e^{-a \nu (t-s)} \int_0^s dr\,
e^{-a' \nu (s-r)} \int_0^1 w(r)^2 \, dx\;.
\end{equation*}
By Assertions \ref{s07} and \ref{s05}, the previous expression is less
than or equal to $C_0 \delta^4$. This concludes the proof of the
proposition. 
\end{proof}

\begin{proof}[Proof of Proposition \ref{s16}]
By Proposition \ref{s17}, for every $t_0>0$, there exists $\nu_0<\infty$ and
$B<\infty$, where $B$ depends on $\alpha(s)$, $0\le s\le t_0$, and on
the initial condition $v_0$, such that for all $0\le t\le t_0$
\begin{equation}
\label{30}
\Vert u_\nu(t)\Vert^2_\infty \;\le\; B \;, \quad
\int_0^1 [\partial_x u_\nu(t)]^2 \, dx \;\le\; B \;
\end{equation}

Fix $t_0>0$ and $0\le t< t_0$. By definition,
$u_\nu(t,0)=u_\nu(t,1)=0$, and 
\begin{equation*}
\partial_t u_\nu \;=\; \nu \, \Big\{ \partial_x \big[ 
D(\alpha_t + \varepsilon u_\nu) \partial_x u_\nu\big] - \alpha'(t)
\Big\}  \;=\; \nu \, \partial_x \Big\{  
D(\alpha_t + \varepsilon u_\nu) \partial_x u_\nu 
- D(\alpha_t) \partial_x v_t \Big\} \;.  
\end{equation*}
where $\varepsilon = \nu^{-1}$.

Therefore, for every $t\ge 0$, an integration by parts yields
\begin{align*}
& \frac 12 \frac d {dt} \int_0^1 [u_\nu (t) - v_t]^2 \, dx \; 
=\; \int_0^1 [u_\nu (t) - v_t] \, \partial_t v_t\, dx \\
&\qquad\qquad -\,
\nu \int_0^1 \partial_x [u_\nu (t) - v_t] \, \big\{
D(\alpha_t + \varepsilon u_\nu) \partial_x u_\nu -
D(\alpha_t) \partial_x v_t\big\}\, dx \;.
\end{align*}
The second term is less than or equal to
\begin{align*}
& - \nu \int_0^1 D(\alpha_t) \, 
[\partial_x u_\nu (t) - \partial_x v_t]^2 \, dx
\;+\; C_0 \int_0^1 |\partial_x u_\nu (t) - \partial_x v_t|
\, |u_\nu| \, |\partial_x u_\nu| \,  dx \\
&\quad \le\;
- c_0 \nu \int_0^1 [\partial_x u_\nu (t) - \partial_x v_t]^2 \, dx
\;+\; \frac {C_0}{\nu}  \int_0^1 u_\nu(t)^2  [\partial_x u_\nu(t)]^2
\, dx \;.
\end{align*}
By \eqref{30}, the second term is bounded by $B/\nu$. Therefore, by
Young's inequality and by Poincar\'e's inequality,
\begin{equation*}
\frac 12 \frac d {dt} \int_0^1 [u_\nu (t) - v_t]^2 \, dx \; 
\le \; -c_0 \nu \int_0^1 [u_\nu (t) - v_t]^2 \, dx 
\; +\; \frac{C_0}\nu \int_0^1 (\partial_t v_t)^2 \, dx
+\, \frac B\nu \;. 
\end{equation*}
To conclude the proof, it remains to apply Gronwall inequality.
\end{proof}

\section{The diffusion coefficient}

We provide in this section of formula for the diffusion coefficient.

Fix a cylinder function $f:\{0,1\}^{\bb Z}\to\bb R$, and recall from
\eqref{40} that $\hat f :[0,1]\to\bb R$ represents the polynomial
defined by
\begin{equation*}
\hat f(\theta) \;=\; E_{\nu_\theta} [ f(\xi)]\;.
\end{equation*}
Write $E_{\nu_{\theta+h}} [ f(\xi)]$ as $E_{\nu_{\theta}} [ f(\xi)
N_h]$, where $N_h$ is the Radon-Nikodym derivative of
$\nu_{\theta+h}$, restricted to the support of $f$, with respect to
$\nu_{\theta}$, to get that
\begin{equation}
\label{61}
\hat f'(\theta) \;=\; \frac 1{\mf c(\theta)}
\sum_{k\in \bb Z} \< f ; \eta(k) \>_{\theta}\;,
\end{equation}
where $\< f ; g \>_{\theta}$ represents the covariance between two
cylinder functions $f$, $g$ in $L^2(\nu_\theta)$: $\< f ; g
\>_{\theta} = E_{\nu_\theta} [ f g] - E_{\nu_\theta} [
f]E_{\nu_\theta} [ g]$, and $\mf c(\theta)$ the static
compressibility, given by $\mf c(\theta)=\theta (1-\theta)$.

Recall the definitions of the cylinder function $h$, introduced in
\eqref{07}. We claim that
\begin{equation}
\label{62}
\hat h' (\theta) \;=\; D(\theta)  \;.
\end{equation}
Indeed, since the cylinder function $c(\eta)$ does not depend
on $\eta(0)$ and $\eta(1)$,
\begin{equation*}
\mf c(\theta)\, D(\theta) \;=\; - \sum_{k\in\bb Z} k \, \big<
[\eta(0)-\eta(1)] c(\eta) \,;\, \eta(k) \big>_\theta \;.
\end{equation*}
Note that all terms in this sum vanish but the one with $k=1$, and
that the sum over $k$ is finite because $c$ is a cylinder function.
By \eqref{06} and by a change of variables, the right hand side is
equal to
\begin{equation*}
- \sum_{k\in\bb Z} k \, \sum_{a=1}^m \sum_{j\in \bb Z} \mu_a(j)\, \big< \tau_{-j} h_a
\,;\, \eta(k) \big>_\theta  \;=\;
- \sum_{a=1}^m \sum_{k,j\in\bb Z} k \,  
\mu_a(j)\, \big< \tau_{-(j+k)} h_a \,;\, \eta(0) \big>_\theta \;.
\end{equation*}
Note that sum over $j$ is finite because $\mu_a$ has finite
support. By definition of $m_a$, and since the total mass of $\mu_a$
vanishes, $\sum_j \mu_a(j)=0$, performing the change of variables
$k'=j+k$ last term becomes
\begin{equation*}
\sum_{a=1}^m m_a \sum_{k,\in\bb Z}  \big< \tau_{-k} h_a \,;\, \eta(0)
\big>_\theta \;=\; \sum_{a=1}^m m_a \sum_{k,\in\bb Z}  \big< h_a \,;\,
\eta(k) \big>_\theta  \;=\; \mf c(\theta) 
\sum_{a=1}^m m_a \hat h'_a (\theta) \;, 
\end{equation*} 
where the last identity follows from \eqref{61}. This last expression
is equal to $\mf c(\theta) \hat h' (\theta)$, which concludes the
proof of \eqref{62}.

\smallskip\noindent{\bf Acknowledgments} The second author would like
to thank L. Bertini, A. De Sole, D. Gabrielli and C. Jona-Lasinio for
introducing him to theory of finite time thermodynamics.

\end{document}